\numberwithin{equation}{section}
\titleformat{\section}[block]{\bfseries\filcenter}
{{\upshape\thesection\enspace}}{.5em}{}
\titleformat{\subsection}[block]{\filcenter}
{{\upshape\thesubsection\enspace}}{.5em}{} 
\titleformat{\subsubsection}[block]{\filcenter}
{{\upshape\thesubsubsection\enspace}}{.5em}{} 
\setlist{nosep}  
\newcommand{\N}{\mathbb{N}}     
\newcommand{\R}{\mathbb{R}}     
\newcommand{\C}{\mathbb{C}}     
\newcommand{\Prob}{\mathbb{P}}  
\newcommand{\Exp}{\mathbb{E}}   
\newcommand{\goth}[1]{\mathfrak{#1}} 
\newcommand{\inner}[2]{\left\langle #1 \, , \, #2 \right\rangle} 
\newcommand{\norm}[1]{\left|\left|#1\right|\right|}              
\newcommand{\triplet}[3]{\left( #1, #2, #3 \right) }             
\newcommand{\ProbSpace}{\triplet{\Omega}{\mathscr{F}}{\Prob}}    
\newcommand{\abs}[1]{\left| #1 \right|}                          
\newcommand{\defeq}{\mathrel{\mathop:}=}                         
\newcommand\restr[2]{{
  \left.\kern-\nulldelimiterspace 
  #1 
  \vphantom{\big|} 
  \right|_{#2} 
  }}
\newsavebox{\@brx}
\newcommand{\llangle}[1][]{\savebox{\@brx}{\(\m@th{#1\langle}\)}%
  \mathopen{\copy\@brx\kern-0.5\wd\@brx\usebox{\@brx}}}
\newcommand{\rrangle}[1][]{\savebox{\@brx}{\(\m@th{#1\rangle}\)}%
  \mathclose{\copy\@brx\kern-0.5\wd\@brx\usebox{\@brx}}}
\theoremstyle{plain} 
\newtheorem{theorem}{Theorem}[section]    
\newtheorem{proposition}[theorem]{Proposition} 
\newtheorem{corollary}[theorem]{Corollary}
\theoremstyle{definition}
\newtheorem{remark}[theorem]{Remark}
 \title{\Large Almost Sure Uniform Convergence of Stochastic Processes in the Dual of a Nuclear Space}
\author{C. A. Fonseca-Mora}
\affil{  Escuela de Matem\'{a}tica, Universidad de Costa Rica, San Jos\'{e}, \\
Cod. 11501-2060, Costa Rica. \\
\noindent E-mail:  christianandres.fonseca@ucr.ac.cr }
\date{}
\begin{document}

 \maketitle

\abstract{Let $\Phi$ be a nuclear space and let $\Phi'$ denote its strong dual. In this paper we introduce sufficient conditions for the almost surely uniform convergence on bounded intervals of time for a sequence of $\Phi'$-valued processes having continuous (respectively c\`{a}dl\`{a}g) paths. The main result is formulated first in the general setting of cylindrical processes but later specialized to other situations of interest. In particular, we establish conditions for the convergence to occur in a Hilbert space continuously embedded in $\Phi'$. Furthermore, in the context of the dual of an ultrabornological nuclear space (like spaces of smooth functions and distributions) we also include applications to the convergence of a series of independent c\`{a}dl\`{a}g process and to the convergence of solutions to linear evolution equations driven by L\'{e}vy noise.}

\smallskip

\emph{2020 Mathematics Subject Classification:} 60B11, 60G17, 60G20.

\emph{Key words and phrases:} Cylindrical stochastic processes, processes with continuous and c\`{a}dl\`{a}g paths, almost sure uniform convergence, dual of a nuclear space.

\section{Introduction}

Let $\Phi$ be a nuclear space with strong dual $\Phi'$, and let $(X^{n}: n \in \N)$ be a sequence of $\Phi'$-valued process with continuous (respectively c\`{a}dl\`{a}g) paths.  Our main objective in this paper is to determine sufficient conditions for the existence of a $\Phi'$-valued process with continuous (respectively c\`{a}dl\`{a}g) paths such that $X^{n}$ converges to  $X$ almost surely uniformly on each bounded interval of time. 

The problem described in the above paragraph was firstly studied by I. Mitoma in \cite{Mitoma:1983AS}. There, Mitoma shown that if $\Phi$ is a Fr\'{e}chet nuclear space then a sufficient condition for the existence of an almost sure uniform limit is that for each $\phi \in \Phi$ the sequence $\inner{X^{n}}{\phi}$ converges almost surely uniformly on each bounded interval of time (Theorem 1 in \cite{Mitoma:1983AS}).

Motivated by the work of Mitoma and by the recent developments in \cite{FonsecaMora:Existence} on the existence of continuous and c\`{a}dl\`{a}g versions for cylindrical processes in the dual of a nuclear space, in this work we carried out a further extension to the work of Mitoma. Indeed, this extension is carried out in two directions: we assume that $\Phi$ is a general nuclear space and we consider cylindrical processes (i.e. a collection of cylindrical random variables indexed by time) instead of stochastic processes. Here the attribute cylindrical for a random variable refers to a random object whose law is a cylindrical, rather than a bona fide, probability measure. Recently, there has been an increasing interest in the usage of cylindrical processes as the driving noise of stochastic partial differential equations (see e.g.  \cite{FonsecaMora:SPDEs, KumarRiedle:2020,  SunXieXie:2020, vanNeervenVeraarWeis:2008, VeraarYaroslavtsev:2016}). 


In the next few paragraphs we describe our results. We start with some preliminaries in Sect. \ref{sectionPrelim}. Then  Section \ref{sectionAlmSureUnifConverg} is devoted to study our main objective as described above. Our main result is Theorem \ref{theoAlmosSureUnifConver} where we generalize the result of Mitoma in \cite{Mitoma:1983AS} to the context of a sequence of cylindrical stochastic processes defined in the strong dual to a general nuclear space. Section \ref{subSecMainResult} is devoted to the proof of Theorem \ref{theoAlmosSureUnifConver} and in Section \ref{subSectConvSingleHilbert} we prove an specialized version of Theorem \ref{theoAlmosSureUnifConver} for the convergence to occur in a Hilbert space continuously embedded in $\Phi'$ (Theorem \ref{theoAlmosSureUnifConverSingleHilbertSpace}). The proofs of these two theorems are based on a combination of the arguments used by Mitoma in \cite{Mitoma:1983AS} and the techniques developed by the author in \cite{FonsecaMora:Existence}. 

In Section \ref{sectUnifConStochProcess} we restrict our attention to specialized versions of Theorem \ref{theoAlmosSureUnifConver} for the case of a sequence of stochastic processes taking values in the dual to a ultrabornological nuclear space. Our main result is Theorem 
\ref{theoProcessesAlmosSureUnifConver} in Section \ref{subSectASUnifConvStochProcess} where we show that under the hypothesis that $\Phi$ is nuclear ultrabornological, if  $(X^{n}: n \in \N)$ is a sequence of  $\Phi'$-valued processes with continuous (respectively c\`{a}dl\`{a}g paths) and Radon probability distributions, then a sufficient condition for the existence of an almost sure uniform limit is that for each $\phi \in \Phi$ the sequence $\inner{X^{n}}{\phi}$ converges almost surely uniformly on each bounded interval of time. As a corollary we obtain the theorem originally proved by Mitoma in   
\cite{Mitoma:1983AS} (see Corollary \ref{coroMitomaTheorem}). 

Latter, in Section \ref{subSectConInLrHilbert} we introduce sufficient conditions for the $L^{r}$-convergence of the sequence $(X^{n}: n \in \N)$ in a Hilbert space embedded in $\Phi'$  uniformly on a bounded interval of time. Moreover, in Section \ref{subSectConvSeriesCadProcess} we apply our previous results to show that the  partial sums of a sequence of independent c\`{a}dl\`{a}d processes converges almost uniformly on bounded intervals of time provided we have convergence of finite dimensional distributions (see Theorem \ref{theoItoNisioSkorokhod}). This result extends to the context of duals of nuclear spaces a result from Basse'O-Connor and Rosi\'{n}ski \cite{BasseOConnorRosinski:2013} formulated for stochastic processes taking values in a separable Banach space. 

Finally, in Section \ref{sectConvLevySEE} we apply the tools developed in Section \ref{sectUnifConStochProcess} to provide sufficient conditions for the  almost surely convergence to the sequence of (generalized) Langevin equations 
$$ dY^{n}_{t}=A^{n}(t)'Y^{n} dt+ dL^{n}_{t},$$
where for each $n \in \N$, $(A^{n}(t)':t \geq 0)$ is the family of dual operators to a family $(A^{n}(t):t \geq 0)$ of continuous linear operators which generates a backward evolution system $(U(s,t): 0 \leq s \leq t < \infty)$ of continuous linear operators on $\Phi$ and $(L^{n}_{t}: t \geq 0)$ is a $\Phi'$-valued L\'{e}vy process. Existence, uniqueness and weak convergence of solutions in the Skorokhod space of such (generalized) Langevin equations were studied by the author in \cite{FonsecaMora:SEELevy}. 

\section{Preliminaries} \label{sectionPrelim}


Let $\Phi$ be a locally convex space (we will only consider vector spaces over $\R$). $\Phi$ is \emph{quasi-complete} if each bounded and closed subset of it is complete. $\Phi$ is called  \emph{bornological} (respectively \emph{ultrabornological}) if it is the inductive limit of a family of normed (respectively Banach)  spaces. A \emph{barreled space} is a locally convex space such that every convex, balanced, absorbing and closed subset is a neighborhood of zero. Every quasi-complete bornological space is ultrabornological and hence barrelled. For further details see \cite{Jarchow, NariciBeckenstein}.   

If $p$ is a continuous semi-norm on $\Phi$ and $r>0$, the closed ball of radius $r$ of $p$ given by $B_{p}(r) = \left\{ \phi \in \Phi: p(\phi) \leq r \right\}$ is a closed, convex, balanced neighborhood of zero in $\Phi$. A continuous seminorm (respectively norm) $p$ on $\Phi$ is called \emph{Hilbertian} if $p(\phi)^{2}=Q(\phi,\phi)$, for all $\phi \in \Phi$, where $Q$ is a symmetric, non-negative bilinear form (respectively inner product) on $\Phi \times \Phi$. For any given continuous seminorm $p$ on $\Phi$ let $\Phi_{p}$ be the Banach space that corresponds to the completion of the normed space $(\Phi / \mbox{ker}(p), \tilde{p})$, where $\tilde{p}(\phi+\mbox{ker}(p))=p(\phi)$ for each $\phi \in \Phi$. We denote by  $\Phi'_{p}$ the Banach space dual to $\Phi_{p}$ and by $p'$ the corresponding dual norm. Observe that if $p$ is Hilbertian then $\Phi_{p}$ and $\Phi'_{p}$ are Hilbert spaces. If $q$ is another continuous seminorm on $\Phi$ for which $p \leq q$, we have that $\mbox{ker}(q) \subseteq \mbox{ker}(p)$ and the inclusion map from $\Phi / \mbox{ker}(q)$ into $\Phi / \mbox{ker}(p)$ has a unique continuous and linear extension that we denote by $i_{p,q}:\Phi_{q} \rightarrow \Phi_{p}$. Furthermore, we have the following relation: $i_{p}=i_{p,q} \circ i_{q}$.

We denote by $\Phi'$ the topological dual of $\Phi$ and by $\inner{f}{\phi}$ the canonical pairing of elements $f \in \Phi'$, $\phi \in \Phi$. Unless otherwise specified, $\Phi'$ will always be consider equipped with its \emph{strong topology}, i.e. the topology on $\Phi'$ generated by the family of semi-norms $( \eta_{B} )$, where for each $B \subseteq \Phi$ bounded, $\eta_{B}(f)=\sup \{ \abs{\inner{f}{\phi}}: \phi \in B \}$ for all $f \in \Phi'$.  

Let $p$ and $q$ be continuous Hilbertian semi-norms on $\Phi$ such that $p \leq q$. The space of continuous linear operators (respectively Hilbert-Schmidt operators) from $\Phi_{q}$ into $\Phi_{p}$ is denoted by $\mathcal{L}(\Phi_{q},\Phi_{p})$ (respectively $\mathcal{L}_{2}(\Phi_{q},\Phi_{p})$). We employ an analogous notation for operators between the dual spaces $\Phi'_{p}$ and $\Phi'_{q}$. 

Let us recall that a (Hausdorff) locally convex space $(\Phi,\mathcal{T})$ is called \emph{nuclear} if its topology $\mathcal{T}$ is generated by a family $\Pi$ of Hilbertian semi-norms such that for each $p \in \Pi$ there exists $q \in \Pi$, satisfying $p \leq q$ and the canonical inclusion $i_{p,q}: \Phi_{q} \rightarrow \Phi_{p}$ is Hilbert-Schmidt. Other equivalent definitions of nuclear spaces can be found in \cite{Pietsch, Treves}. 

Let $\Phi$ be a nuclear space. If $p$ is a continuous Hilbertian semi-norm  on $\Phi$, then the Hilbert space $\Phi_{p}$ is separable (see \cite{Pietsch}, Proposition 4.4.9 and Theorem 4.4.10, p.82). Now, let $( p_{n} : n \in \N)$ be an increasing sequence of continuous Hilbertian semi-norms on $(\Phi,\mathcal{T})$. We denote by $\theta$ the locally convex topology on $\Phi$ generated by the family $( p_{n} : n \in \N)$. The topology $\theta$ is weaker than $\mathcal{T}$. We  will call $\theta$ a (weaker) \emph{countably Hilbertian topology} on $\Phi$ and we denote by $\Phi_{\theta}$ the space $(\Phi,\theta)$ and by $\widehat{\Phi}_{\theta}$ its completion. The space $\widehat{\Phi}_{\theta}$ is a (not necessarily Hausdorff) separable, complete, pseudo-metrizable (hence Baire and ultrabornological; see Example 13.2.8(b) and Theorem 13.2.12 in \cite{NariciBeckenstein}) locally convex space and its dual space satisfies $(\widehat{\Phi}_{\theta})'=(\Phi_{\theta})'=\bigcup_{n \in \N} \Phi'_{p_{n}}$ (see \cite{FonsecaMora:Existence}, Proposition 2.4).

 
Throughout this work we assume that $\ProbSpace$ is a complete probability space and consider a filtration $( \mathcal{F}_{t} : t \geq 0)$ on $\ProbSpace$ that satisfies the \emph{usual conditions}, i.e. it is right continuous and $\mathcal{F}_{0}$ contains all subsets of sets of $\mathcal{F}$ of $\Prob$-measure zero. The space $L^{0} \ProbSpace$ of equivalence classes of real-valued random variables defined on $\ProbSpace$ will always be considered equipped with the topology of convergence in probability and in this case it is a complete, metrizable, topological vector space.

A \emph{cylindrical random variable}\index{cylindrical random variable} in $\Phi'$ is a linear map $X: \Phi \rightarrow L^{0} \ProbSpace$ (see \cite{FonsecaMora:Existence}). If $X$ is a cylindrical random variable in $\Phi'$, we say that $X$ is \emph{$n$-integrable} ($n \in \N$)  if $ \Exp \left( \abs{X(\phi)}^{n} \right)< \infty$, $\forall \, \phi \in \Phi$, and has \emph{zero-mean} if $ \Exp \left( X(\phi) \right)=0$, $\forall \phi \in \Phi$. The \emph{Fourier transform} of $X$ is the map from $\Phi$ into $\C$ given by $\phi \mapsto \Exp ( e^{i X(\phi)})$.

Let $X$ be a $\Phi'$-valued random variable, i.e. $X:\Omega \rightarrow \Phi'$ is a $\mathscr{F}/\mathcal{B}(\Phi')$-measurable map. For each $\phi \in \Phi$ we denote by $\inner{X}{\phi}$ the real-valued random variable defined by $\inner{X}{\phi}(\omega) \defeq \inner{X(\omega)}{\phi}$, for all $\omega \in \Omega$. The linear mapping $\phi \mapsto \inner{X}{\phi}$ is called the \emph{cylindrical random variable induced/defined by} $X$. We will say that a $\Phi'$-valued random variable $X$ is \emph{$n$-integrable} ($n \in \N$) if the cylindrical random variable induced by $X$ is \emph{$n$-integrable}. 
 
Let $J=\R_{+} \defeq [0,\infty)$ or $J=[0,T]$ for  $T>0$. We say that $X=( X_{t}: t \in J)$ is a \emph{cylindrical process} in $\Phi'$ if $X_{t}$ is a cylindrical random variable for each $t \in J$. Clearly, any $\Phi'$-valued stochastic processes $X=( X_{t}: t \in J)$ induces/defines a cylindrical process under the prescription: $\inner{X}{\phi}=( \inner{X_{t}}{\phi}: t \in J)$, for each $\phi \in \Phi$. 

If $X$ is a cylindrical random variable in $\Phi'$, a $\Phi'$-valued random variable $Y$ is called a \emph{version} of $X$ if for every $\phi \in \Phi$, $X(\phi)=\inner{Y}{\phi}$ $\Prob$-a.e. A $\Phi'$-valued process $Y=(Y_{t}:t \in J)$ is said to be a $\Phi'$-valued \emph{version} of the cylindrical process $X=(X_{t}: t \in J)$ on $\Phi'$ if for each $t \in J$, $Y_{t}$ is a $\Phi'$-valued version of $X_{t}$.  

For a $\Phi'$-valued process $X=( X_{t}: t \in J)$ terms like continuous, c\`{a}dl\`{a}g, purely discontinuous, adapted, predictable, etc. have the usual (obvious) meaning. 

A $\Phi'$-valued random variable $X$ is called \emph{regular} if there exists a weaker countably Hilbertian topology $\theta$ on $\Phi$ such that $\Prob( \omega: X(\omega) \in (\widehat{\Phi}_{\theta})')=1$. Furthermore, a $\Phi'$-valued process $Y=(Y_{t}:t \in J)$ is said to be \emph{regular} if $Y_{t}$ is a regular random variable for each $t \in J$. 

\section{Almost Sure Uniform Convergence for Cylindrical Processes}\label{sectionAlmSureUnifConverg}

\subsection{Main Result}\label{subSecMainResult}

The main result of this paper is the following theorem that establishes conditions for the almost sure uniform convergence of a sequence of (cylindrical) processes in $\Phi'$ on bounded intervals of time.  

\begin{theorem}\label{theoAlmosSureUnifConver}
Let $\Phi$ be a nuclear space and let $(X^{n}: n \in \N)$, with $X^{n} =(X^{n}_{t}: t \geq 0)$, be a sequence of cylindrical process in $\Phi'$ satisfying:
\begin{enumerate}
\item For each $\phi \in \Phi$ the real-valued process $X^{n}(\phi)=( X^{n}_{t}(\phi): t \geq 0)$ is continuous (respectively c\`{a}dl\`{a}g).
\item For every $n \in \N$ and $T > 0$, the family $( X^{n}_{t}: t \in [0,T] )$ of linear maps from $\Phi$ into $L^{0} \ProbSpace$ is equicontinuous.  
\item For every $\phi \in \Phi$ and $T>0$, the sequence $X^{n}(\phi)(\omega)$ converges uniformly on $[0,T]$  for $\Prob$-a.e. $\omega \in \Omega$. 
\end{enumerate}
Then, there exists a weaker countably Hilbertian topology $\vartheta$ on $\Phi$ and some  $(\widehat{\Phi}_{\vartheta})'$-valued continuous (respectively c\`{a}dl\`{a}g) processes  $Y= (Y_{t}: t \geq 0)$ and  $Y^{n}= (Y^{n}_{t}: t \geq 0)$, $n \in \N$,  such that 
\begin{enumerate}[label=(\roman*)]
\item For every $\phi \in \Phi$ and $n \in \N$, the real-valued processes $\inner{Y^{n}}{\phi}$ and $X^{n}(\phi)$ are indistinguishable.
\item For $\Prob$-a.e. $\omega \in \Omega$ and every $T>0$, there exists a $\vartheta$-continuous Hilbertian seminorm $p=p(\omega,T)$ on $\Phi$ such that $Y^{n}(\omega)$ converges to $Y(\omega)$ in $\Phi'_{p}$ uniformly on $[0,T]$. 
\end{enumerate}
Moreover, as  $\Phi'$-valued processes $Y$  and $Y^{n}$, $n \in \N$,  are continuous (respectively c\`{a}dl\`{a}g) processes and for $\Prob$-a.e. $\omega \in \Omega$, $Y^{n}(\omega)$ converges to $Y(\omega)$ in $\Phi'$ uniformly on $[0,T]$ for every $T>0$. 
\end{theorem}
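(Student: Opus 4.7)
The plan is to combine Mitoma's countable-dense-set argument with the nuclear-space machinery used in \cite{FonsecaMora:Existence} to simultaneously produce the versions, the weaker countably Hilbertian topology, and the Hilbert-space in which uniform convergence happens.

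First I would build the topology $\vartheta$. For each $n \in \N$ and each $T \in \N$, the equicontinuity of $\{X^{n}_{t}: t \in [0,T]\}$ from $\Phi$ into $L^{0}\ProbSpace$ supplies a continuous Hilbertian seminorm $\rho_{n,T}$ on $\Phi$ which controls these cylindrical variables in probability; by nuclearity of $\Phi$ I can find a continuous Hilbertian seminorm $q_{n,T} \geq \rho_{n,T}$ such that the inclusion $i_{\rho_{n,T},q_{n,T}}:\Phi_{q_{n,T}}\to\Phi_{\rho_{n,T}}$ is Hilbert--Schmidt, and iterate once more to get $p_{n,T}\ge q_{n,T}$ with $i_{q_{n,T},p_{n,T}}$ Hilbert--Schmidt. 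Let $\vartheta$ be the (weaker) countably Hilbertian topology generated by the countable family $(p_{n,T}:n,T\in\N)$. Then $\widehat{\Phi}_{\vartheta}$ is separable, pseudo-metrizable and ultrabornological, and $(\widehat{\Phi}_{\vartheta})'=\bigcup_{n,T}\Phi'_{p_{n,T}}$.

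Next I would produce the versions. For each $n$, the equicontinuity hypothesis together with the path-regularity of $X^{n}(\phi)$ places $X^{n}$ in the setting of \cite{FonsecaMora:Existence}, so there exists a continuous (respectively c\`adl\`ag) $(\widehat{\Phi}_{\vartheta})'$-valued process $Y^{n}$ with $\inner{Y^{n}}{\phi}$ indistinguishable from $X^{n}(\phi)$ for every $\phi\in\Phi$; in fact, with the construction via $\vartheta$ above the paths of $Y^{n}$ are locally bounded in the Banach dual norm $p'_{n,T}$ on each $[0,T]$, off a null set.

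For the convergence, I would follow Mitoma's strategy. Choose a countable subset $D\subset\Phi$ which is $\vartheta$-dense and closed under $\Q$-linear combinations. By hypothesis~(3) and countability of $D$, there is an event $\Omega_{0}$ of full measure such that, for every $\omega\in\Omega_{0}$, every $\phi\in D$ and every $T\in\N$, the sequence $X^{n}(\phi)(\omega)=\inner{Y^{n}(\omega)}{\phi}$ converges uniformly on $[0,T]$. Fix $T$ and $\omega\in\Omega_{0}$. Using the equicontinuity constant associated to $q_{n,T}$ and a probabilistic version of the Banach--Steinhaus principle adapted to $L^{0}$ (the argument used in Section~4 of \cite{FonsecaMora:Existence}), I would show that after discarding a further null set the orbit $\{Y^{n}_{t}(\omega):t\in[0,T],n\in\N\}$ is a bounded subset of some $\Phi'_{q_{m,T}}$ with $m=m(\omega,T)$ large enough. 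The Hilbert--Schmidt dual embedding $i'_{q_{m,T},p_{m,T}}:\Phi'_{q_{m,T}}\hookrightarrow\Phi'_{p_{m,T}}$ then converts pointwise weak-$*$ convergence along $D$ into norm-convergence in $\Phi'_{p_{m,T}}$, uniformly in $t\in[0,T]$: writing $Y^{n}_{t}(\omega)-Y^{k}_{t}(\omega)$ in terms of an orthonormal basis $(\phi_{j})$ of $\Phi_{p_{m,T}}$ and splitting $j\le N$ vs $j>N$, the tail is controlled by the Hilbert--Schmidt norm times the uniform bound, while the head is controlled by the uniform convergence of $\inner{Y^{n}}{\phi_{j}}$ for $\phi_{j}$ (approximated by elements of $D$). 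This yields a Cauchy sequence in $C([0,T];\Phi'_{p_{m,T}})$ (respectively $D([0,T];\Phi'_{p_{m,T}})$), hence a limit $Y(\omega)$ taking values in $(\widehat{\Phi}_{\vartheta})'$; setting $p=p_{m(\omega,T),T}$ gives (ii), and (i) is immediate from the dense set argument.

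The main obstacle I expect is precisely the step of upgrading the pointwise (in $\phi$) a.s. uniform-in-$t$ convergence to a uniform-in-$(t,\phi)$ convergence in a random Hilbert norm. Getting a deterministic continuous seminorm does not work because the uniform bound on $\eta_{q}(Y^{n}_{t}(\omega))$ for $t\in[0,T]$ is only random; the trick is to allow $p=p(\omega,T)$ to depend measurably (or at least pathwise) on $\omega$, and to exploit the Hilbert--Schmidt embedding so that finite-dimensional convergence plus a random uniform bound forces norm convergence in a single larger Hilbert space. Once this upgrade is performed, the passage back from $(\widehat{\Phi}_{\vartheta})'$ to $\Phi'$ in the last sentence of the theorem is a formal continuity statement, since the inclusion $(\widehat{\Phi}_{\vartheta})'\hookrightarrow\Phi'$ is continuous.
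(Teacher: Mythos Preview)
Your overall strategy—regularize each $X^{n}$ via \cite{FonsecaMora:Existence}, pass to a weaker countably Hilbertian topology, and then use a Hilbert--Schmidt embedding together with convergence on a countable dense set—matches the paper's, and your final ``head/tail'' argument for norm convergence in a dual Hilbert space is essentially what the paper does at the end.

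The gap is exactly where you flag it, and your proposed fix does not close it. Your seminorms $\rho_{n,T}\le q_{n,T}\le p_{n,T}$ come from the equicontinuity of the \emph{single} family $(X^{n}_{t}:t\in[0,T])$, so they depend on $n$. There is no reason why, for a given $\omega$, the whole orbit $\{Y^{n}_{t}(\omega):n\in\N,\,t\in[0,T]\}$ should sit boundedly in one $\Phi'_{q_{m,T}}$: the seminorm $q_{m,T}$ knows only about $X^{m}$, and ``$m$ large enough'' has no content since the $q_{n,T}$ are not ordered. A plain Banach--Steinhaus argument for $\widehat{\Phi}_{\vartheta}\to L^{0}$ gives only equicontinuity \emph{in probability} uniformly in $(n,t)$; it does not deliver the pathwise bound $\sup_{n,t}q'(Y^{n}_{t}(\omega))<\infty$ for a random $q$ that your Hilbert--Schmidt step needs.

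The paper inserts an additional device to manufacture seminorms \emph{independent of $n$}. After forming a topology $\theta$ from the individual regularizations (this is the role your $\vartheta$ is trying to play), it proves and applies Proposition~\ref{propEquicontFourTransforms}: using that $\sup_{n}\sup_{t\le T}|\inner{Y^{n}_{t}}{\phi}|<\infty$ a.s.\ (from hypothesis~(3)) and that $\widehat{\Phi}_{\theta}$ is ultrabornological, the pseudo-seminorm
\[
\phi\longmapsto \int_{\Omega}\frac{\sup_{n}\sup_{t\le T}|\inner{Y^{n}_{t}}{\phi}|}{1+\sup_{n}\sup_{t\le T}|\inner{Y^{n}_{t}}{\phi}|}\,d\Prob
\]
is shown to be continuous, which yields for each $\epsilon_{m}\downarrow 0$ a $\theta$-continuous seminorm $p_{m}$ (here $m$ indexes an error level, not a process) with
\[
\int_{\Omega}\sup_{n}\sup_{t\le T}\bigl|1-e^{i\inner{Y^{n}_{t}}{\phi}}\bigr|\,d\Prob\le\epsilon_{m}+2p_{m}(\phi),\qquad\phi\in\widehat{\Phi}_{\theta}.
\]
A Gaussian-averaging trick (as in Lemma~3.8 of \cite{FonsecaMora:Existence}) then converts this into $\Prob\bigl(\sup_{n}\sup_{t\le T}q_{m}'(Y^{n}_{t})<\infty\bigr)\ge 1-C\epsilon_{m}$ for a Hilbert--Schmidt enlargement $q_{m}\ge p_{m}$. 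Since $\epsilon_{m}\to 0$, a.e.\ $\omega$ lies in some $\Gamma_{m}$, and this furnishes the random seminorm you need; one further Hilbert--Schmidt step $\varrho_{m}\ge q_{m}$ gives the space in which the limit $Y$ is defined and the uniform convergence is proved, along the lines of your head/tail argument. In short, the missing idea is this Fourier-transform/pseudo-seminorm lemma producing, from the joint hypothesis~(3), seminorms controlling all $n$ simultaneously; your $\vartheta$ is built too early, directly from the $n$-by-$n$ equicontinuity data, and cannot by itself supply that uniform-in-$n$ control.
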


\begin{remark}\label{remaTheoAlmosSureUnifConver}
Suppose that in Theorem \ref{theoAlmosSureUnifConver} we have that $(X^{n}: n \in \N)$ is a sequence of $\Phi'$-valued regular processes. 
In such a case, the conclusion $(i)$ implies that for each $n \in \N$, $Y^{n}$ is a continuous (respectively c\`{a}dl\`{a}g) version of $X^{n}$ (see Proposition 2.11 in \cite{FonsecaMora:Existence}). 
If moreover each $X^{n}$ has continuous (respectively c\`{a}dl\`{a}g) paths, then $(i)$ implies that $X^{n}$ and $Y^{n}$ are indistinguishable processes (see Proposition 2.12 in \cite{FonsecaMora:Existence}).  Then, according to $(ii)$ for $\Prob$-a.e. $\omega \in \Omega$ and every $T>0$, there exists a $\vartheta$-continuous Hilbertian seminorm $p=p(\omega,T)$ on $\Phi$ such that $X^{n}(\omega)$ converges to $Y(\omega)$ in $\Phi'_{p}$ uniformly on $[0,T]$ (hence the convergence also occurs in $\Phi'$).
\end{remark}

In order to prove Theorem \ref{theoAlmosSureUnifConver} we will need the following result which is of a more general nature. 

\begin{proposition}\label{propEquicontFourTransforms}
Let $Z^{n}=(Z^{n}: t \in [0,T])$ be a sequence of cylindrical processes in the dual $\Psi'$ of an ultrabornological space $\Psi$ such that for each $\phi \in \Psi$, $Z^{n}(\phi)$ is continuous for each $n \in \N$ and $\sup_{n} \sup_{0 \leq t \leq T} \abs{Z_{t}^{n}(\phi)}<\infty$ $\Prob$-a.e. $\omega \in \Omega$.  Assume moreover that for each $n \in \N$, $t \in [0,T]$, the mapping $Z^{n}_{t}: \Psi \rightarrow L^{0}\ProbSpace$ is continuous. Then, for every $\epsilon >0$ there exists a continuous seminorm $p$ on $\Psi$ such that
\begin{equation}\label{eqPropEquicontFourTransforms}
\int_{\Omega} \, \sup_{n \in \N} \sup_{0 \leq t \leq T} \abs{1-e^{iZ^{n}_{t}(\phi)}} \, d\Prob \leq \epsilon + 2 p(\phi), \quad \forall \, \phi \in \Psi. 
\end{equation}
\end{proposition}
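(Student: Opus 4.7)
The plan is to bound
$\rho(\phi):=\int_{\Omega}\sup_{n\in\N}\sup_{0\leq t\leq T}|1-e^{iZ^{n}_{t}(\phi)}|\,d\Prob$
by $\epsilon+2p(\phi)$ for a suitable continuous seminorm $p$ on $\Psi$. First I collect elementary properties of $\rho$: from $|1-e^{i(x+y)}|\leq|1-e^{ix}|+|1-e^{iy}|$ together with the linearity of each $Z^{n}_{t}$, $\rho$ is subadditive; from $|1-e^{-ix}|=|1-e^{ix}|$, symmetric; and trivially $\rho\leq 2$. Lower semicontinuity of $\rho$ on $\Psi$ follows from continuity of each $Z^{n}_{t}\colon\Psi\to L^{0}\ProbSpace$ combined with the path-continuity hypothesis, which reduces $\sup_{t\in[0,T]}$ to $\sup_{t\in\Q\cap[0,T]}$ and permits a diagonal extraction along a sequence $\phi_{k}\to\phi$ yielding a.s.\ convergence $Z^{n}_{t}(\phi_{k})\to Z^{n}_{t}(\phi)$ for every $(n,t)\in\N\times(\Q\cap[0,T])$; Fatou's lemma then gives $\rho(\phi)\leq\liminf_{k}\rho(\phi_{k})$. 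Finally the ray-continuity $\rho(\lambda\phi)\to 0$ as $\lambda\to 0$ follows from the pointwise bound $|1-e^{i\lambda x}|\leq|\lambda x|\wedge 2$, the a.s.\ finiteness $\sup_{n,t}|Z^{n}_{t}(\phi)|<\infty$, and dominated convergence.

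Because $|1-e^{i\lambda x}|$ is not monotone in $|\lambda|$, the sublevel sets of $\rho$ need not be balanced, so I work instead with $\tilde\rho(\phi):=\sup_{|\lambda|\leq 1}\rho(\lambda\phi)\geq\rho(\phi)$, which retains subadditivity, symmetry, the bound $\tilde\rho\leq 2$, lower semicontinuity, and ray-continuity at the origin, while additionally satisfying $\tilde\rho(\mu\phi)\leq\tilde\rho(\phi)$ for $|\mu|\leq 1$ (so its sublevel sets are balanced). It will be enough to produce a convex balanced $0$-neighbourhood $V$ of $\Psi$ with $\tilde\rho|_{V}\leq\epsilon$: letting $p$ be the Minkowski functional of $V$, on $V$ we have $\rho\leq\tilde\rho\leq\epsilon$ and $p\leq 1$, while off $V$ we have $p>1$ and so $\rho\leq 2<2p$; in either case $\rho(\phi)\leq\epsilon+2p(\phi)$.

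The construction of such a $V$ uses the ultrabornological structure of $\Psi$. Writing $\Psi$ as the inductive limit of the Banach spaces $(\Psi_{B},\|\cdot\|_{B})$ associated to its Banach discs $B$, I apply Baire category on each $\Psi_{B}$: the covering $\Psi_{B}=\bigcup_{N\geq 1}N\cdot\{\tilde\rho\leq\epsilon/2\}$ (by ray-continuity) consists of closed sets (by lower semicontinuity), so $\{\tilde\rho\leq\epsilon/2\}\cap\Psi_{B}$ has non-empty interior, and then the inclusion $\{\tilde\rho\leq\epsilon/2\}-\{\tilde\rho\leq\epsilon/2\}\subseteq\{\tilde\rho\leq\epsilon\}$ (from subadditivity and symmetry) shows that $\{\tilde\rho\leq\epsilon\}\cap\Psi_{B}$ contains a convex balanced $0$-neighbourhood, which we may take to be a ball $\{\|\cdot\|_{B}\leq r_{B}\}$. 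The main obstacle is the passage from these local neighbourhoods to a single convex balanced $0$-neighbourhood of the full space $\Psi$ contained in $\{\tilde\rho\leq\epsilon\}$, because the sublevel sets of $\tilde\rho$ are not convex (a naive convex combination of $n$ points in $\{\tilde\rho\leq\epsilon\}$ yields only the bound $\tilde\rho\leq n\epsilon$). The ultrabornological hypothesis is precisely what makes this possible: every absolutely convex bornivorous subset of $\Psi$ is a $0$-neighbourhood, so one can build the desired $V$ as an absolutely convex bornivorous set assembled from the local radii $(r_{B})$ with appropriate rescalings chosen to absorb the constant losses in the subadditivity estimates across convex combinations.
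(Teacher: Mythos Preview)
Your approach diverges from the paper's: instead of working directly with $\rho$, the paper introduces the auxiliary functional
\[
V(\phi)=\int_{\Omega}\frac{\sup_{n}\sup_{0\leq t\leq T}|Z^{n}_{t}(\phi)|}{1+\sup_{n}\sup_{0\leq t\leq T}|Z^{n}_{t}(\phi)|}\,d\Prob,
\]
verifies that $V$ is a sequentially lower semicontinuous pseudo-seminorm, and then invokes a black-box result (Proposition~5.7 in \cite{FonsecaMora:Skorokhod}) stating that such a functional on an ultrabornological space is automatically continuous. From continuity of $V$ the bound on $\rho$ follows by an elementary splitting argument.

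Your argument has a genuine gap in the last paragraph. You correctly reach the point where, for every Banach disc $B$, the set $\{\tilde\rho\leq\epsilon\}\cap\Psi_{B}$ contains a ball $r_{B}B$, and you correctly identify the obstacle: $\{\tilde\rho\leq\epsilon\}$ is balanced and bornivorous but not convex, so ultrabornologicity does not directly make it a $0$-neighbourhood. Your proposed fix---``assemble an absolutely convex bornivorous $V\subseteq\{\tilde\rho\leq\epsilon\}$ from the $r_{B}B$ with appropriate rescalings''---does not work as stated. Any point of the absolutely convex hull of $\bigcup_{B}r'_{B}B$ is a finite combination $\sum_{i=1}^{n}\lambda_{i}x_{i}$ with $x_{i}\in r'_{B_{i}}B_{i}$, and subadditivity together with $|\lambda_{i}|\leq 1$ only yields $\tilde\rho\bigl(\sum\lambda_{i}x_{i}\bigr)\leq\sum\tilde\rho(x_{i})\leq n\epsilon$; since $n$ is unbounded and the $B_{i}$ may all be distinct, no uniform rescaling of the radii $r'_{B}$ controls this. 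The phrase ``absorb the constant losses'' hides exactly the step that needs an argument.

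The clean way to close the gap is to observe that your $\tilde\rho$ is itself a sequentially lower semicontinuous pseudo-seminorm: properties (1)--(3) you already have, and property (4) (that $\tilde\rho(\phi_{m})\to 0$ forces $\tilde\rho(\lambda\phi_{m})\to 0$) follows from subadditivity, since for integer $k\geq|\lambda|$ one has $\tilde\rho(\lambda\phi_{m})\leq\tilde\rho(k\phi_{m})\leq k\,\tilde\rho(\phi_{m})$. Then the same external result the paper cites gives continuity of $\tilde\rho$ at $0$, hence the $0$-neighbourhood $\{\tilde\rho\leq\epsilon\}$ you need. In other words, your route and the paper's route both rest on that continuity lemma for pseudo-seminorms on ultrabornological spaces; the paper applies it to $V$, you would apply it to $\tilde\rho$, but you cannot bypass it by the hands-on construction you sketch.
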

\begin{proof}
We modify the arguments used in the proof of Proposition 5.4 in \cite{FonsecaMora:Skorokhod}. Define 


\begin{equation*}
 V(\phi)=\int_{\Omega} \, \frac{\sup_{n} \sup_{0 \leq t \leq T} \abs{Z_{t}^{n}(\phi)}}{1+\sup_{n} \sup_{0 \leq t \leq T} \abs{Z_{t}^{n}(\phi)}} \, d\Prob, \quad \forall \, \psi \in \Psi.
\end{equation*}
Assume for the moment that $V$ is continuous. Let $\epsilon >0$. From the continuity of the exponential function there exists $\delta_{1} >0$ such that $\abs{ 1-e^{ir}} \leq \frac{\epsilon}{2}$ whenever $\abs{r} < \delta_{1}$. By the continuity of $V$ there exists a continuous seminorm $p$ on $\Psi$ such that $V(\phi) \leq (\delta_{2})^{2}$ $\forall \phi \in B_{p}(1)$, where $\delta_{2} = \min\left\{\delta_{1}, \frac{-1+\sqrt{1+\epsilon}}{2} \right\}$. 

Now because $\sup_{n} \sup_{0 \leq t \leq T} \abs{1-e^{iZ^{n}_{t}(\phi)}} \leq 2$ for any $\phi \in \Phi$, then if $\phi \in B_{p}(1)^{c}$, we have 
$$ \int_{\Omega} \,\sup_{n} \sup_{0 \leq t \leq T} \abs{1-e^{iZ^{n}_{t}(\phi)}} d\Prob  \leq  2p(\phi)^{2}.$$

On the other hand, given $\phi \in \Psi$, let $\Gamma_{\phi} =\{  \omega \in \Omega: \sup_{n} \sup_{0 \leq t \leq T} \abs{Z_{t}^{n}(\phi)(\omega)} \leq \delta_{2} \}$. Then, we have for all $\forall \phi \in B_{p}(1)$  
\begin{eqnarray*}
\int_{\Omega} \,\sup_{n} \sup_{0 \leq t \leq T} \abs{1-e^{iZ^{n}_{t}(\phi)}} d\Prob  
& \leq & \int_{\Gamma_{\phi}} \,\sup_{n} \sup_{0 \leq t \leq T} \abs{1-e^{iZ^{n}_{t}(\phi)}} d\Prob  + 2 \Prob (\Gamma_{\phi}^{c}) \\
& \leq & \frac{\epsilon}{2} +2 \frac{(1+\delta_{2})}{\delta_{2}} V(\phi) \leq  \frac{\epsilon}{2} +2 \frac{\epsilon}{4}= \epsilon,
\end{eqnarray*}
From the above inequalities we obtain \eqref{eqPropEquicontFourTransforms}. 

Now we show that $V$ is continuous. Since $\Psi$ is ultrabornological, by Proposition 5.7 in \cite{FonsecaMora:Skorokhod} it is enough to prove that $V$ is a sequentially lower semicontinuous pseudo-seminorm on $\Psi$.  

To conclude that $V$ is is a pseudo-seminorm we must check that the following is satisfied: 
\begin{enumerate}
\item $V(\phi_{1}+\phi_{2}) \leq V(\phi_{1})+V(\phi_{2})$, for every $\phi_{1}, \phi_{2} \in \Psi$.  
\item For $\lambda \in \R$, $\abs{\lambda} \leq 1$ implies $V(\lambda \phi) \leq V(\phi)$, $\forall \, \phi  \in \Psi$.
\item If $\lambda_{m} \rightarrow 0$, then $V(\lambda_{m} \phi) \rightarrow 0$, $\forall \, \phi \in \Psi$. 
\item $V(\phi_{m}) \rightarrow 0$ implies $V(\lambda \phi_{m})\rightarrow 0$, $\forall \lambda \in \R$. 
\end{enumerate}

Properties (1) and (2) are immediate from the fact that the function $x \mapsto \frac{x}{1+x}$ is a subadditive and increasing. 
 
To prove (3), let $\lambda_{m} \rightarrow 0$ and $\phi \in \Psi$. Since $\sup_{n} \sup_{0 \leq t \leq T} \abs{Z_{t}^{n}(\phi)}<\infty$ $\Prob$-a.e., by the dominated convergence theorem we have $V(\lambda_{n} \phi) \rightarrow 0$. 

To prove (4), let $\lambda \in \R$ and let $(\phi_{m} )_{m \in \N} \subseteq \Psi$ such that $\displaystyle{\lim_{m \rightarrow \infty} V(\phi_{m})=0}$. Consider any subsequence $(\phi_{m_{k}} )_{k \in \N}$ of $(\phi_{m} )_{m \in \N}$. Then  $\displaystyle{\lim_{k \rightarrow \infty} V(\phi_{m_{k}})=0}$. Hence, for each $r \in \N$ there exists $\phi_{m_{k(r)}}$ such that 
\begin{equation*}
V(\phi_{m_{k(r)}})=\int_{\Omega} \frac{\sup_{n} \sup_{0 \leq t \leq T} \abs{Z^{n}_{t}(\phi_{m_{k(r)}})}}{1+\sup_{n} \sup_{0 \leq t \leq T} \abs{Z^{n}_{t}(\phi_{m_{k(r)}})}} d \Prob \leq \frac{1}{2^{r+2}}.  
\end{equation*} 
Then,  
\begin{eqnarray*}
\Prob \left( \sup_{n} \sup_{0 \leq t \leq T} \abs{Z^{n}_{t}(\phi_{m_{k(r)}})} > 2^{-r} \right) 
& \leq & \frac{1+2^{-r}}{2^{-r}} \int_{\Omega} \frac{\sup_{n} \sup_{0 \leq t \leq T} \abs{Z^{n}_{t}(\phi_{m_{k(r)}})}}{1+\sup_{n} \sup_{0 \leq t \leq T} \abs{Z^{n}_{t}(\phi_{m_{k(r)}})}} d \Prob  \\
& \leq &  3 \cdot \frac{1}{2^{r+2}} \leq \frac{1}{2^{r}} 
\end{eqnarray*}
Then, it follows that for every $r \in \N$ we have
\begin{eqnarray*}
V(\lambda \phi_{m_{k(r)}}) 
& \leq & \Prob \left( \sup_{n} \sup_{0 \leq t \leq T} \abs{Z^{n}_{t}(\lambda \phi_{m_{k(r)}})} > \abs{\lambda} 2^{-r} \right)  + \frac{\abs{\lambda}2^{-r}}{1+\abs{\lambda}2^{-r}} \\
& < & 2^{-r} ( 1+\abs{\lambda}).
\end{eqnarray*}
So, we conclude that $\displaystyle{\lim_{r \rightarrow \infty} V(\lambda \phi_{m_{k(r)}})=0}$. As each subsequence of $(V(\lambda \phi_{m}): m \in \N)$ has a further subsequence that converges to $0$, it follows that $\displaystyle{\lim_{m \rightarrow \infty} V(\lambda \phi_{m})=0}$.  

Since we have shown that $V$ is a pseudo-seminorm, our final task is to show it is sequentially lower semicontinuous. To do this, suppose $(\phi_{m}: m \in \N)$ converges to $\phi$ in $\Psi$. Our assumption that each mapping $Z^{n}_{t}: \Psi \rightarrow L^{0}\ProbSpace$ is continuous implies that the mapping $\varphi \mapsto \sup_{n}  \sup_{0 \leq t \leq T} \abs{Z^{n}_{t}(\varphi)}$ is lower semicontinuous. Then, from Fatou's lemma we have
\begin{eqnarray*}
V(\phi) & \leq & \int_{\Omega} \, \liminf_{m \rightarrow \infty} \frac{\sup_{n} \sup_{0 \leq t \leq T}  \abs{Z_{t}^{n}(\phi_{m})}}{1+\sup_{n} \sup_{0 \leq t \leq T} \abs{Z_{t}^{n}(\phi_{m})}} \, d\Prob \\
& \leq & \liminf_{m \rightarrow \infty} \int_{\Omega} \, \frac{\sup_{n} \sup_{0 \leq t \leq T}  \abs{Z_{t}^{n}(\phi_{m})}}{1+\sup_{n} \sup_{0 \leq t \leq T} \abs{Z_{t}^{n}(\phi_{m})}} \, d\Prob \\
& \leq & \liminf_{m \rightarrow \infty} V(\phi_{m}). 
\end{eqnarray*}
We therefore conclude that $V$ is sequentially lower  semicontinuous thus continuous by the arguments given above. 
\end{proof}

\begin{proof}[Proof of Theorem \ref{theoAlmosSureUnifConver}] We will start by showing that in order to prove Theorem \ref{theoAlmosSureUnifConver} we only need to show its conclusions holds for a sequence of cylindrical processes $(X^{n}_{t}: t \in [0,T])$ defined on a bounded interval of time $[0,T]$ and under the assumption that each $X^{n}(\phi)$ is a continuous process. The arguments are similar to those used in the proof of Theorem 3.2 in \cite{FonsecaMora:Existence}, so we summarize the main steps. 

In effect, if the result is valid for every $T>0$, then for every $k \in \N$ we can find a weaker countably Hilbertian topology $\vartheta_{k}$ on $\Phi$, and some $(\widehat{\Phi}_{\vartheta_{k}})'$-valued continuous  processes $Y^{(k)}=\left( Y^{(k)}_{t}: t \in [0,k] \right)$ and $Y^{(k,n)}= \left(Y^{(k,n)}_{t}: t \in [0,k] \right)$, $n \in \N$, satisfying $(i)$ and $(ii)$ in Theorem \ref{theoAlmosSureUnifConver} on $[0,k]$. 

Let $\vartheta$ denote the countably Hilbertian topology on $\Phi$ generated by the families of seminorms generating the topologies $\vartheta_{k}$, $k \in \N$. The topology $\vartheta$ is finner than each $\vartheta_{k}$, but is weaker than the given topology on $\Phi$. Therefore, through the canonical inclusion from  $(\widehat{\Phi}_{\vartheta_{k}})'$ into  $(\widehat{\Phi}_{\vartheta})'$, each $Y^{(k,n)}$ can be considered as a $(\widehat{\Phi}_{\vartheta})'$-valued continuous process and similarly for $Y^{(k)}$.  Since for each $k, n \in \N$ and $\phi \in \Phi$, the processes $\inner{Y^{(k,n)}}{\phi}$ and $X^{n}(\phi)$ are indistinguishable on the time interval $[0,k]$, then for each $\phi \in \Phi$, $\inner{Y^{(k,n)}}{\phi}$ and $\inner{Y^{(k+1,n)}}{\phi}$ are indistinguishable as processes defined on $[0,k]$.  Then, the fact that as a $\Phi'$-valued process, each $Y^{(k,n)}$ is a regular process with continuous trajectories implies that $Y^{(k,n)}$ and $Y^{(k+1,n)}$ are indistinguishable as processes defined on $[0,k]$ (see Proposition 2.12 in \cite{FonsecaMora:Existence}). Moreover, since $Y^{(k)}$ and $Y^{(k+1)}$ are the $\omega$-wise uniform limit on $[0,k]$ of the sequences $(Y^{(k,n)}: n \in \N)$ and $(Y^{(k+1,n)}: n \in \N)$, then by uniqueness of limits we have that $Y^{(k)}$ and $Y^{(k+1)}$ are indistinguishable on $[0,k]$ for each $k \in \N$.     
 
For every $n \in \N$, take $Y^{n}=(Y^{n}_{t}: t \geq 0)$ defined by the prescription $Y^{n}_{t}=Y^{(k,n)}_{t}$ if $t \in [0,k]$. In a similar way let $Y=(Y_{t}: t \geq 0)$ defined by the prescription $Y_{t}=Y^{(k)}_{t}$ if $t \in [0,k]$. From the arguments in the above paragraph it is clear that $Y$ and each $Y^{n}$, $n \in \N$, is a $(\widehat{\Phi}_{\vartheta})'$-valued process  with continuous trajectories satisfying $(i)$ and $(ii)$ in Theorem \ref{theoAlmosSureUnifConver}. Since the canonical inclusion from $(\widehat{\Phi}_{\vartheta})'$ into $\Phi'$ is continuous, we directly obtain that as $\Phi'$-valued processes, $Y$  and $Y^{n}$, $n \in \N$,  are continuous processes and for $\Prob$-a.e. $\omega \in \Omega$, $Y^{n}(\omega)$ converges to $Y(\omega)$ in $\Phi'$ uniformly on $[0,T]$ for every $T>0$. The same arguments can be applied in the c\`{a}dl\`{a}g version case. 


From now on we will fix $T>0$ and show that the conclusions of Theorem \ref{theoAlmosSureUnifConver} are valid for a sequence of cylindrical process defined on $[0,T]$  under the continuous version assumption. 

For every $n \in \N$, from the first two assumptions in Theorem \ref{theoAlmosSureUnifConver} and the regularization theorem (Theorem 3.2 in \cite{FonsecaMora:Existence}) there exists a weaker countably Hilbertian topology $\theta_{n}$ on $\Phi$ and a $(\widehat{\Phi}_{\theta_{n}})'$-valued continuous process $Y^{n}=(Y^{n}_{t}: t \in [0,T])$ such that $\inner{Y^{n}}{\phi}$ and $X^{n}(\phi)$ are indistinguishable processes for every $\phi \in \Phi$. Since for each $t \in [0,T]$ the mapping $X^{n}_{t}: \Phi \rightarrow L^{0}\ProbSpace$ is continuous and linear, then the linear mapping $Y^{n}_{t}: \widehat{\Phi}_{\theta_{n}} \rightarrow L^{0}\ProbSpace$ is continuous on $\Phi$, and $\Phi$ being dense in $\widehat{\Phi}_{\theta_{n}}$ implies that $Y^{n}_{t}$ is continuous on $\widehat{\Phi}_{\theta_{n}}$. 

Let $\theta$ denotes the countably Hilbertian topology on $\Phi$ generated by the families of seminorms generating the topologies $\theta_{n}$, $n \in \N$. By definition $\theta$ is weaker than the given topology on $\Phi$ and is finer than each $\theta_{n}$. Thus each $Y^{n}$ is a  
$(\widehat{\Phi}_{\theta})'$-valued continuous process and $Y^{n}_{t}$ is continuous as a linear operator from $\widehat{\Phi}_{\theta}$ into $L^{0}\ProbSpace$. 

Let $(\epsilon_{m}: m \in \N)$ be a sequence of positive numbers converging to $0$. By Proposition \ref{propEquicontFourTransforms} and since $\widehat{\Phi}_{\theta}$ is ultrabornological, there exists an increasing sequence $(p_{m}: m \in \N)$ of $\theta$-continuous Hilbertian seminorms on $\Phi$ such that 
\begin{equation}\label{eqSequenceYnEquicontFourTransforms}
\int_{\Omega} \, \sup_{n \in \N} \sup_{0 \leq t \leq T} \abs{1-e^{i\inner{Y^{n}_{t}}{\phi}}} \, d\Prob \leq \epsilon_{m} + 2 p_{m}(\phi), \quad \forall \, \phi \in \widehat{\Phi}_{\theta}. 
\end{equation}
Observe that since $\Phi$ is nuclear, and each $p_{m}$ is continuous on $\Phi$, we can choose an increasing sequence $(q_{m}: m \in \N)$  of continuous Hilbertian seminorms on $\Phi$,  such that $p_{m} \leq q_{m}$ and the inclusion $i_{p_{m}, q_{m}}$ is Hilbert-Schmidt for every $m \in \N$. If we denote by $\alpha$ the countably Hilbertian topology on $\Phi$ generated by the seminorms $(q_{m}: m \in \N)$ we then have that $\alpha$ is finer than $\theta$ and weaker than the nuclear topology in $\Phi$. Therefore, we can regard each $Y^{n}$ as a $(\widehat{\Phi}_{\alpha})'$-valued continuous process.    

Moreover, being $\widehat{\Phi}_{\alpha}$ separable, if we choose any countable dense subset $(\xi_{k}: k \in \N)$ of $\widehat{\Phi}_{\alpha}$, by the Schmidt orthogonalization procedure for each $m \in \N$ there exists a complete orthonormal system $( \phi_{j}^{q_{m}}: j \in \N) \subseteq \Phi$ of $\Phi_{q_{m}}$, such that 
\begin{equation} \label{eqDecompDenseSetInTermsOrtoBasis}
\xi_{k}= \sum_{j=1}^{k} a_{j,k,m} \, \phi_{j}^{q_{m}} + \varphi_{k,m}, \quad \forall \, k \in \N, 
\end{equation}
with $a_{j,k,m} \in \R$ and $\varphi_{k,m} \in \mbox{Ker}(q_{m})$, for each $j,k \in \N$. 
Then, for every $m \in \N$ and by using \eqref{eqSequenceYnEquicontFourTransforms},  following similar arguments to those used in the proof of Lemma 3.8 in \cite{FonsecaMora:Existence} (see also  Lemma 3.2 in \cite{Mitoma:1983SC}) it follows that for any $C>0$ we have
\begin{flalign*}
& \Prob \left( \sup_{n \in \N} \sup_{0 \leq t \leq T}  \sum_{k=1}^{\infty} \abs{\inner{Y^{n}_{t}}{\phi_{k}^{q_{m}}}}^{2} > C^{2} \right) \\
& \leq  \lim_{r \rightarrow \infty} \frac{\sqrt{e}}{\sqrt{e}-1}  
  \int_{ \Omega} \sup_{n \in \N} \sup_{0 \leq t \leq T} \left( 1- \exp  \frac{-1}{2C^{2}} \sum_{k=1}^{r} \abs{\inner{Y^{n}_{t}}{\phi_{k}^{q_{m}}}}^{2}   \right) d \Prob  \\
& \leq  \lim_{r \rightarrow \infty} \frac{\sqrt{e}}{\sqrt{e}-1}      \int_{\R^{r}}\int_{ \Omega} \sup_{n \in \N} \sup_{0 \leq t \leq T}   \abs{ 1- \exp  i \sum_{k=1}^{r} \frac{z_{k} \inner{Y^{n}_{t}}{\phi_{k}^{q_{m}}}}{2C^{2}}   } d \Prob \, \frac{e^{\frac{-\abs{z}^{2}}{2}}}{(2\pi)^{\frac{r}{2}}} dz \\
& \leq   \frac{\sqrt{e}}{\sqrt{e}-1} \left(\epsilon_{m} + \frac{2}{C^{2}} \sum_{k=1}^{\infty} p_{m}(\phi_{k}^{q_{m}})^{2}  \right) 
\end{flalign*}
Since $\sum_{k=1}^{\infty} p_{m}(\phi_{k}^{q_{m}})^{2}<\infty$ because  $i_{p_{m},q_{m}}$ is Hilbert-Schmidt, letting $C\rightarrow \infty$ and by considering the probability of the complement we conclude that 
$$ \Prob \left( \sup_{n \in \N} \sup_{0 \leq t \leq T}  \sum_{k=1}^{\infty} \abs{\inner{Y^{n}_{t}}{\phi_{k}^{q_{m}}}}^{2} < \infty \right) \geq 1- \frac{\sqrt{e}}{\sqrt{e}-1} \epsilon_{m}. $$
In a similar way we can conclude that 
$$ \Prob \left( \sup_{n \in \N} \sup_{0 \leq t \leq T}  \sum_{k=1}^{\infty} \abs{\inner{Y^{n}_{t}}{\varphi_{k,m}}}^{2} = 0 \right) \geq 1- \frac{\sqrt{e}}{\sqrt{e}-1} \epsilon_{m}. $$
From the two inequalities above we have that
\begin{equation}\label{eqProbGammaM}
\Prob \left( \Gamma_{m} \right) \geq 1- 2  \frac{\sqrt{e}}{\sqrt{e}-1} \epsilon_{m}, 
\end{equation}
where  
\begin{equation}\label{eqDefiSetGammaM}
\Gamma_{m}=\left\{ \omega: \, \sup_{n \in \N} \sup_{0 \leq t \leq T}  \sum_{k=1}^{\infty} \abs{\inner{Y^{n}_{t}}{\phi_{k}^{q_{m}}}}^{2} < \infty, \, \sup_{n \in \N} \sup_{0 \leq t \leq T}  \sum_{k=1}^{\infty} \abs{\inner{Y^{n}_{t}}{\varphi_{k,m}}}^{2} = 0 \right\}. 
\end{equation}
Now, following the proof of Lemma 2 in \cite{Mitoma:1983SC}, if $\phi \in \widehat{\Phi}_{\alpha}$
there exists a subsequence $(\xi_{k_{\nu}}: \nu \in \N)$ of $(\xi_{k}: k \in \N)$ which  $\alpha$-converges to $\phi$; in particular $\lim_{\nu \rightarrow \infty} q_{m}(\phi-\xi_{k_{\nu}})=0$ $\forall m \in \N$. Then, because $Y^{n}_{t}(\omega) \in (\widehat{\Phi}_{\alpha})'$, if $\omega \in \Gamma_{m}$ by \eqref{eqDecompDenseSetInTermsOrtoBasis} we have 
\begin{eqnarray*}
\sup_{n \in \N} \sup_{0 \leq t \leq T} 
\abs{\inner{Y^{n}_{t}}{\phi}} 
& \leq & \sup_{n \in \N} \sup_{0 \leq t \leq T} \lim_{\nu \rightarrow \infty} \abs{ \sum_{j=1}^{k} a_{j,k_{\nu},m}  \inner{Y^{n}_{t}}{\phi_{j}^{q_{m}}} + \inner{Y^{n}_{t}}{\varphi_{k_{\nu},m}}} \\
&\leq & \left(  \lim_{\nu \rightarrow \infty} \sum_{j=1}^{k} a_{j,k_{\nu},m}^{2}  \right)^{1/2} C(m,\omega)^{1/2} \\
& = & \lim_{\nu \rightarrow \infty} q_{m}(\xi_{k_{\nu}}) C(m,\omega)^{1/2} < \infty,
\end{eqnarray*}   
where $C(m,\omega)= \sup_{n \in \N} \sup_{0 \leq t \leq T}  \sum_{k=1}^{\infty} \abs{\inner{Y^{n}_{t}(\omega)}{\phi_{k}^{q_{m}}}}^{2} < \infty $. Hence we have
\begin{equation}\label{eqInclusionYnFiniteHilbert}
\Gamma_{m} \subseteq  \left\{ \sup_{n \in \N} \sup_{0 \leq t \leq T} q'_{m}\left( Y^{n}_{t} \right) < \infty \right\}. 
\end{equation}
We will use the inclusion in \eqref{eqInclusionYnFiniteHilbert} in our construction of the limit process $Y$. But before we will need some further preparations. In our construction we will benefit from the arguments used in the proof of Theorem 1 in \cite{Mitoma:1983AS}. 

For each $m \in \N$, observe that since $q_{m}$ is a continuous Hilbertian seminorm on the nuclear space $\Phi$, there exists a continuous Hilbertian seminorm $\varrho_{m}$ on $\Phi$ such that $q_{m} \leq \varrho_{m}$ and the canonical inclusion $i_{q_{m}, \varrho_{m}}$ is Hilbert-Schmidt. Moreover, for each $m \in \N$ let $(\phi_{j}^{\varrho_{m}}: j \in \N) \subseteq \Phi$ be a complete orthonormal system in $\Phi_{\varrho_{m}}$ and let $(f_{j}^{\varrho_{m}}: j \in \N)$ be a complete orthonormal system in $\Phi'_{\varrho_{m}}$ dual to $(\phi_{j}^{\varrho_{m}}: j \in \N)$, i.e. $\inner{f_{j}^{\varrho_{m}}}{\phi_{j}^{\varrho_{m}}}=\delta_{ij}$, where $\delta_{ij}=1$ if $i=j$ and $\delta_{ij}=0$ if $i \neq j$. 

For each $m,j \in \N$, there exists a continuous real-valued process $X(\phi_{j}^{\varrho_{m}})$ which is the uniform limit on $[0,T]$ of the sequence $X^{n}(\phi_{j}^{\varrho_{m}})$ for $\Prob$-a.e. $\omega \in \Omega$. Since $\inner{Y^{n}}{\phi_{j}^{\varrho_{m}}}$ and $X^{n}(\phi_{j}^{\varrho_{m}})$ are indistinguishable, then  $\Prob (\Lambda_{m,j})=1$ where
\begin{equation}\label{eqDefiSetLamdaMJ}
\Lambda_{m,j}=\left\{ \omega \in \Omega \sup_{0 \leq t \leq T} \abs{\inner{Y^{n}_{t}(\omega)}{\phi_{j}^{\varrho_{m}}}- X_{t}(\phi_{j}^{\varrho_{m}})(\omega)  } \rightarrow 0, \quad \mbox{as} \quad n \rightarrow \infty \right\}. 
\end{equation}  
 
For each $m \in \N$, let $B_{m}=\Gamma_{m} \cap \bigcap_{j \in \N} \Lambda_{m,j}$. By \eqref{eqProbGammaM} we have 
$$ \Prob \left( B_{m} \right) \geq 1- 2  \frac{\sqrt{e}}{\sqrt{e}-1} \epsilon_{m}. $$
Since $\epsilon_{m} \rightarrow 0$, we conclude that $\Prob \left( \bigcup_{m} B_{m} \right)=1$. Set $\Omega_{1}=B_{1}$, and $\Omega_{n}=B_{n} \setminus B_{n-1}$ for $n \geq 2$. Then  $\Prob \left( \bigcup_{m} \Omega_{m} \right)=1$. 

We can now define the limit process. For each $t \in [0,T]$, we define
\begin{equation}\label{eqDefiLimitProcess}
Y_{t}(\omega)=
\begin{cases}
\sum_{j =1}^{\infty} X_{t}(\phi_{j}^{\varrho_{m}})(\omega) f_{j}^{\varrho_{m}}, & \mbox{if } \omega \in \Omega_{m}, \\
0, & \mbox{if } \omega \notin \bigcup_{m} \Omega_{m}. 
\end{cases}
\end{equation}
We will check that $Y$ is a well-defined process with continuous trajectories. We denote by $C(T, \Phi'_{\varrho_{m}})$ the Banach space of all the continuous mappings $F:[0,T] \rightarrow \Phi'_{\varrho_{m}}$ equipped with the norm $F \mapsto \sup_{0 \leq t \leq T} \varrho'_{m}(F(t))$ of uniform convergence on $[0,T]$. 

Let $\omega \in \Omega_{m}$. Since $\omega \in B_{m}$, then for all $k \in \N$ we have
$$ t \mapsto \sum_{j=1}^{k}  X_{t}(\phi_{j}^{\varrho_{m}})(\omega) f_{j}^{\varrho_{m}}  \in C(T, \Phi'_{\varrho_{m}}).$$
Moreover, for $l  > k \geq 1$ we have by \eqref{eqInclusionYnFiniteHilbert}
\begin{flalign}
& \sup_{0 \leq t \leq T} \varrho'_{m} \left(  \sum_{j=1}^{l}  X_{t}(\phi_{j}^{\varrho_{m}})(\omega) f_{j}^{\varrho_{m}} -  \sum_{j=1}^{k}  X_{t}(\phi_{j}^{\varrho_{m}})(\omega) f_{j}^{\varrho_{m}}  \right)^{2} \nonumber \\
& \leq   \sum_{j=k+1}^{l} \sup_{0 \leq t \leq T} \abs{ X_{t}(\phi_{j}^{\varrho_{m}})(\omega)}^{2} \nonumber \\
& = \sum_{j=k+1}^{l} \sup_{0 \leq t \leq T}  \liminf_{n \rightarrow \infty}  \abs{ \inner{Y^{n}_{t}(\omega)}{\phi_{j}^{\varrho_{m}}}}^{2}   \nonumber \\
& \leq  \sum_{j=k+1}^{l} \sup_{n} \sup_{0 \leq t \leq T} q'_{m}(Y^{n}_{t}(\omega))^{2} \, q_{m}(\phi_{j}^{\varrho_{m}})^{2} \nonumber \\
& \leq \left( \sup_{n} \sup_{0 \leq t \leq T}  q'_{m}(Y^{n}_{t}(\omega)) \right)^{2} \sum_{j=k+1}^{l} q_{m}(\phi_{j}^{\varrho_{m}})^{2}  \rightarrow 0.  \label{eqConvDefiProcessLimit}
\end{flalign}
Since $C(T, \Phi'_{\varrho_{m}})$ is complete, the Cauchy sequence $\sum_{j=1}^{k}  X_{t}(\phi_{j}^{\varrho_{m}})(\omega) f_{j}^{\varrho_{m}}$, $k \in \N$, converges and its limit, which corresponds to \eqref{eqDefiLimitProcess}, is an element of $C(T, \Phi'_{\varrho_{m}})$. 

Let $\vartheta$ be the countably Hilbertian topology on $\Phi$ generated by the family $(\varrho_{m}: m \in \N)$. Since the canonical inclusion from $\Phi'_{\varrho_{m}}$ into $(\widehat{\Phi}_{\vartheta})'$ is continuous, then $Y$ is a $(\widehat{\Phi}_{\vartheta})'$-valued process with continuous trajectories. Observe moreover that since the topology $\vartheta$ is finner than $\alpha$, then each $Y^{n}$ can be regarded as a $(\widehat{\Phi}_{\vartheta})'$-valued continuous process. 

Now we will check that the processes $(Y^{n}: n \in \N)$ converges to $Y$ in the sense described in the statement of Theorem \ref{theoAlmosSureUnifConver}. Let $\omega \in \Omega_{m}$. As before we have
\begin{flalign*}
& \sum_{j=1}^{\infty} \sup_{n} \sup_{0 \leq t \leq T} \abs{\inner{Y^{n}_{t}(\omega) -Y_{t}(\omega)}{\phi_{j}^{\varrho_{m}}}}^{2} \\
& \leq   4 \sum_{j=1}^{\infty} \sup_{n} \sup_{0 \leq t \leq T} \abs{\inner{Y^{n}_{t}}{\phi_{j}^{\varrho_{m}}}}^{2} \\
& \leq  4 \left( \sup_{n} \sup_{0 \leq t \leq T}  q'_{m}(Y^{n}_{t}(\omega)) \right)^{2} \sum_{j=1}^{\infty} q_{m}(\phi_{j}^{\varrho_{m}})^{2} \\
& <  \infty. 
\end{flalign*}
Then, by the dominated convergence theorem we have
\begin{flalign}
& \lim_{n \rightarrow \infty} \sup_{0 \leq t \leq T} \varrho'_{m} \left( Y^{n}_{t}(\omega) -Y_{t}(\omega) \right)^{2} \nonumber \\
& =  \lim_{n \rightarrow \infty} \sup_{0 \leq t \leq T} \sum_{j=1}^{\infty} \abs{\inner{Y^{n}_{t}(\omega) -Y_{t}(\omega)}{\phi_{j}^{\varrho_{m}}}}^{2} \nonumber \\
& \leq  \sum_{j=1}^{\infty} \lim_{n \rightarrow \infty} \sup_{0 \leq t \leq T} \abs{\inner{Y^{n}_{t}(\omega) -Y_{t}(\omega)}{\phi_{j}^{\varrho_{m}}}}^{2} \nonumber \\
& \leq  \sum_{j=1}^{\infty} \lim_{n \rightarrow \infty} \sup_{0 \leq t \leq T} \abs{\inner{Y^{n}_{t}(\omega)}{\phi_{j}^{\varrho_{m}}}-X_{t}(\phi_{j}^{\varrho_{m}})(\omega)}^{2} =  0. \label{eqUnifConvHilbSpace} 
\end{flalign}
Finally, since the canonical inclusion from $(\widehat{\Phi}_{\vartheta})'$ into $\Phi$ is continuous, we conclude that $Y$ and $Y^{n}$, $n \in \N$,  are continuous processes in $\Phi'$ and that $Y^{n}(\omega)$ converges to $Y(\omega)$ in $\Phi'$ uniformly on $[0,T]$ for $\Prob$-a.e. $\omega \in \Omega$.  
\end{proof}

\subsection{Convergence in a Single Hilbert Space}\label{subSectConvSingleHilbert}

Under some additional assumptions in Theorem \ref{theoAlmosSureUnifConver} we can strengthen its conclusions to show almost surely uniform convergence in a single Hilbert space $\Phi'_{\varrho}$ on every bounded interval of time $[0,T]$, where $\varrho$ is a continuous Hilbertian seminorm on $\Phi$  independent of $\omega$ and $T>0$. 

\begin{theorem}\label{theoAlmosSureUnifConverSingleHilbertSpace}
Let $\Phi$ be a nuclear space and let $(X^{n}: n \in \N)$, with $X^{n} =(X^{n}_{t}: t \geq 0)$, be a sequence of cylindrical process in $\Phi'$ satisfying:
\begin{enumerate}
\item For each $\phi \in \Phi$ the real-valued process $X^{n}(\phi)=( X^{n}_{t}(\phi):  t \geq 0)$ is continuous (respectively c\`{a}dl\`{a}g).
\item There exists a continuous Hilbertian seminorm $p$ on $\Phi$ such that for every $n \in \N$ and $t \geq 0$, the mapping $X^{n}_{t}: \Phi \rightarrow L^{0} \ProbSpace$ is $p$-continuous.  
\item For every $\phi \in \Phi$ and $T>0$, the sequence $X^{n}(\phi)(\omega)$ converges uniformly on $[0,T]$  for $\Prob$-a.e. $\omega \in \Omega$. 
\end{enumerate}
Then there exists a continuous Hilbertian seminorm $\varrho$ on $\Phi$, $p \leq \varrho$, such that $i_{p,\varrho}$ is Hilbert-Schmidt and   $\Phi'_{\varrho}$-valued continuous (respectively c\`{a}dl\`{a}g) processes  $Y= (Y_{t}: t \geq 0)$ and  $Y^{n}= (Y^{n}_{t}: t \geq 0)$, $n \in \N$,  such that 
\begin{enumerate}[label=(\roman*)]
\item For every $\phi \in \Phi$ and $n \in \N$, the real-valued processes $\inner{Y^{n}}{\phi}$ and $X^{n}(\phi)$ are indistinguishable.
\item For $\Prob$-a.e. $\omega \in \Omega$, $Y^{n}(\omega)$ converges to $Y(\omega)$ in $\Phi'_{\varrho}$ uniformly on $[0,T]$ for every $T>0$. 
\end{enumerate}
\end{theorem}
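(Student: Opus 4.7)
The plan is to follow the structure of the proof of Theorem~\ref{theoAlmosSureUnifConver} while exploiting hypothesis~(2) to collapse the sequence of seminorms $(p_m,q_m,\varrho_m)$ built there into a single fixed pair $(p,\varrho)$ depending only on the given data. As in the proof of Theorem~\ref{theoAlmosSureUnifConver}, I first reduce to fixed $T>0$ and to the continuous case; because the seminorm $p$ from~(2) is independent of $T$, gluing the pieces obtained on the intervals $[0,k]$, $k\in\N$, preserves the single-Hilbert-space conclusion on $[0,\infty)$. Then I apply the regularization theorem from~\cite{FonsecaMora:Existence} together with the $\theta$-construction of that main proof to obtain $(\widehat{\Phi}_\theta)'$-valued continuous versions $Y^n$ of $X^n$, each $Y^n_t$ continuous on $\widehat{\Phi}_\theta$.

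The decisive step is a single-seminorm refinement of Proposition~\ref{propEquicontFourTransforms}. By hypothesis~(2) every $X^n_t$ factors continuously through the Banach space $\Phi_p$, so the pseudo-seminorm
\[
V(\phi) = \int_{\Omega} \frac{\sup_n \sup_{0\leq t \leq T}\abs{X^n_t(\phi)}}{1+\sup_n \sup_{0\leq t \leq T}\abs{X^n_t(\phi)}}\, d\Prob
\]
vanishes on $\ker(p)$ (combine assumption~(1) with a countable dense set of times), and the arguments of Proposition~\ref{propEquicontFourTransforms} applied with $\Psi=\Phi_p$ show that $V$ is continuous on $\Phi_p$. Since any continuous seminorm on a Banach space is bounded by a constant multiple of its norm, I obtain: for every $\epsilon>0$ there exists $K(\epsilon)>0$ with
\[
\int_{\Omega} \sup_n \sup_{0 \leq t \leq T}\abs{1-e^{i\inner{Y^n_t}{\phi}}}\, d\Prob \leq \epsilon + K(\epsilon)\, p(\phi),\quad \forall \phi\in\Phi.
\]

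By nuclearity of $\Phi$ pick $\varrho$ continuous Hilbertian with $p\leq\varrho$ and $i_{p,\varrho}$ Hilbert-Schmidt, and let $(\phi^\varrho_j)_{j\in\N}\subseteq\Phi$ be a complete orthonormal system of $\Phi_\varrho$, so that $\sum_j p(\phi^\varrho_j)^2<\infty$. The Gaussian averaging argument from the proof of Theorem~\ref{theoAlmosSureUnifConver} (essentially Lemma~3.8 of~\cite{FonsecaMora:Existence}), fed with the above Fourier bound in place of~\eqref{eqSequenceYnEquicontFourTransforms}, yields for every $C,\epsilon>0$
\[
\Prob\Bigl(\sup_n \sup_{0 \leq t \leq T} \sum_j \abs{\inner{Y^n_t}{\phi^\varrho_j}}^2 > C^2\Bigr) \leq \frac{\sqrt{e}}{\sqrt{e}-1}\Bigl(\epsilon + \frac{K(\epsilon)^2}{C^2}\sum_j p(\phi^\varrho_j)^2\Bigr).
\]
Letting $C\to\infty$ and then $\epsilon\to0$, and intersecting over $T\in\N$, gives a full-measure set $\Omega_0$ on which $\sup_n\sup_{0\leq t\leq T}\varrho'(Y^n_t)<\infty$ for every $T>0$. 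On $\Omega_0$ I define $Y_t=\sum_j X_t(\phi^\varrho_j)\, f^\varrho_j$, where $(f^\varrho_j)$ is the complete orthonormal system of $\Phi'_\varrho$ dual to $(\phi^\varrho_j)$ and $X_t(\phi^\varrho_j)$ is the a.s.\ uniform limit of $X^n_t(\phi^\varrho_j)$ provided by~(3); the Cauchy estimate~\eqref{eqConvDefiProcessLimit}, now with the single pair $(p,\varrho)$, shows that $Y$ takes values in $\Phi'_\varrho$ with continuous trajectories.

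Finally, Parseval's identity gives
\[
\sup_{0 \leq t \leq T}\varrho'(Y^n_t-Y_t)^2 \leq \sum_j \sup_{0 \leq t \leq T}\abs{\inner{Y^n_t-Y_t}{\phi^\varrho_j}}^2,
\]
each summand tends to $0$ by hypothesis~(3), and the series is dominated by $4\sup_n\sup_t\varrho'(Y^n_t)^2$, which is finite on $\Omega_0$; dominated convergence then delivers uniform convergence in $\Phi'_\varrho$ on $[0,T]$ for every $T>0$. The main obstacle is the passage from the bare $p$-continuity of~(2) to this uniform Fourier bound controlled by $p$ alone; it is precisely that single-seminorm refinement of Proposition~\ref{propEquicontFourTransforms} which allows the construction to use one fixed Hilbert-Schmidt dilation $\varrho$ of $p$, rather than the sequence of dilations needed in Theorem~\ref{theoAlmosSureUnifConver}.
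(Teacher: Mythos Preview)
Your collapse of the chain of Hilbert--Schmidt dilations to a single step $p\leq\varrho$ does not deliver what the last two steps need. The Gaussian averaging only yields
\[
\sup_{n}\,\sup_{0\le t\le T}\,\sum_{j}\abs{\inner{Y^n_t}{\phi^\varrho_j}}^{2}<\infty\quad\text{on }\Omega_0,
\]
whereas both the Cauchy estimate for the partial sums defining $Y$ (your invocation of~\eqref{eqConvDefiProcessLimit}) and the final dominated-convergence step require the stronger condition
\[
\sum_{j}\,\sup_{n}\,\sup_{0\le t\le T}\abs{\inner{Y^n_t}{\phi^\varrho_j}}^{2}<\infty,
\]
i.e.\ a summable term-by-term dominant. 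Your claim that ``the series is dominated by $4\sup_n\sup_t\varrho'(Y^n_t)^{2}$'' is precisely the unjustified interchange of $\sup_{n,t}$ and $\sum_j$. The paper closes this gap with two further dilations: Theorem~4.1 in~\cite{FonsecaMora:Existence} first produces $\Phi'_r$-valued continuous versions $Y^n$ (with $i_{p,r}$ Hilbert--Schmidt), Proposition~\ref{propEquicontFourTransforms} is then run on the Hilbert space $\Phi_r$, a dilation $r\leq q$ with $i_{r,q}$ Hilbert--Schmidt yields $\sup_n\sup_t q'(Y^n_t)<\infty$ via~\eqref{eqInclusionYnFiniteHilbert}, and only after a \emph{third} dilation $q\leq\varrho$ with $i_{q,\varrho}$ Hilbert--Schmidt does the bound $\abs{\inner{Y^n_t}{\phi^\varrho_j}}\leq q'(Y^n_t)\,q(\phi^\varrho_j)$ combine with $\sum_j q(\phi^\varrho_j)^{2}<\infty$ to give the summable dominant that drives both~\eqref{eqConvDefiProcessLimit} and~\eqref{eqUnifConvHilbSpace}.

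A secondary issue: applying the arguments of Proposition~\ref{propEquicontFourTransforms} directly on $\Psi=\Phi_p$ is delicate, since that proposition needs $Z^n(\phi)$ continuous in $t$ for every $\phi\in\Psi$, while hypothesis~(1) only asserts this for $\phi\in\Phi$, and you have not shown that the continuous extensions of $X^n_t$ to $\Phi_p$ assemble into processes continuous in $t$. The paper avoids this by first passing to $\Phi'_r$-valued continuous versions, so that path continuity in $t$ holds automatically for every $\phi\in\Phi_r$, and only then invoking Proposition~\ref{propEquicontFourTransforms} on $\Phi_r$.
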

\begin{proof} 
It is enough to show that the conclusions of Theorem \ref{theoAlmosSureUnifConverSingleHilbertSpace} holds for a sequence of cylindrical processes $(X^{n}_{t}: t \in [0,T])$ defined on a bounded interval of time $[0,T]$ and under the assumption that each $X^{n}(\phi)$ is a continuous process. Indeed, in such a case we have for each $k \in \N$ the existence of $\Phi'_{\varrho}$-valued continuous processes $Y^{(k)}=\left( Y^{(k)}_{t}: t \in [0,k] \right)$ and $Y^{(k,n)}= \left(Y^{(k,n)}_{t}: t \in [0,k] \right)$, $n \in \N$, satisfying $(i)$ and $(ii)$  on $[0,k]$. Then for every $n \in \N$, take $Y^{n}=(Y^{n}_{t}: t \geq 0)$ defined by  $Y^{n}_{t}=Y^{(k,n)}_{t}$ if $t \in [0,k]$, and take $Y=(Y_{t}: t \geq 0)$ defined by the  $Y_{t}=Y^{(k)}_{t}$ if $t \in [0,k]$. As in the proof of Theorem \ref{theoAlmosSureUnifConver} it is easy to verify that $Y^{n}$ and $Y$ are $\Phi'_{\varrho}$-valued continuous processes satisfying (i) in (ii) in Theorem \ref{theoAlmosSureUnifConverSingleHilbertSpace}.   

Given $T>0$, we prove the result holds on the interval $[0,T]$. First, from assumptions (1) and (2) and from the version of the regularization theorem for $p$-continuous cylindrical processes  
(Theorem 4.1 in \cite{FonsecaMora:Existence}) there exists a continuous Hilbertian seminorm $r$ on $\Phi$, $p \leq r$, such that the inclusion $i_{p,r}$ is Hilbert-Schmidt, and such that for every $n \in \N$ there exists a $\Phi'_{r}$-valued continuous process $Y^{n}=(Y^{n}_{t}: t \in [0,T])$ satisfying that $\inner{Y^{n}}{\phi}$ and $X^{n}(\phi)$ are indistinguishable processes for every $\phi \in \Phi$. Since $\Phi'_{r}$ is a separable Hilbert space and $Y^{n}_{t}$ is $\Phi'_{r}$-valued then the mapping $Y^{n}_{t}: \Phi_{r} \rightarrow L^{0}\ProbSpace$ is continuous (see Theorem 2.10 in \cite{FonsecaMora:Existence}).

Let $(\epsilon_{m}: m \in \N)$ be a sequence of positive numbers converging to $0$. If we apply Proposition  \ref{propEquicontFourTransforms} to the mappings $Y^{n}$ on the space $\Phi_{r}$ then we can show that \eqref{eqSequenceYnEquicontFourTransforms} is satisfied for $p_{n}=r$ $\forall n \in \N$. Let $q$ be a continuous Hilbertian seminorm on $\Phi$ such that $r \leq q$ and $i_{r,q}$ is Hilbert-Schmidt. Being $\Phi_{q}$ separable there exists a complete orthonormal system $(\phi_{j}^{q}: j \in \N) \subseteq \Phi$ such that  \eqref{eqDecompDenseSetInTermsOrtoBasis}, \eqref{eqDefiSetGammaM} and \eqref{eqInclusionYnFiniteHilbert} holds for $q_{n}=q$ $\forall n \in \N$. If we choose any continuous Hilbertian seminorm $\varrho$ on $\Phi$ such that $q \leq \varrho$ and $i_{q, \varrho}$ is Hilbert-Schmidt. Then \eqref{eqDefiSetLamdaMJ}, \eqref{eqDefiLimitProcess}, \eqref{eqConvDefiProcessLimit} and \eqref{eqUnifConvHilbSpace} holds for $\varrho_{n}=\varrho$, $\forall n \in \N$. This shows that (i) and (ii) in Theorem \ref{theoAlmosSureUnifConverSingleHilbertSpace} holds on any bounded interval $[0,T]$. 
\end{proof}

\section{Uniform Convergence for Stochastic Processes}\label{sectUnifConStochProcess}

\subsection{Almost Surely Uniform Convergence}\label{subSectASUnifConvStochProcess}

In this section we restrict our attention to specialized versions of Theorem \ref{theoAlmosSureUnifConver} for the case of a sequence of stochastic processes taking values in the dual space $\Phi'$ to a ultrabornological nuclear space $\Phi$.

It is worth to mention that the class of ultrabornological nuclear spaces is large enough for many applications. It is well-known (see e.g. \cite{Pietsch, Schaefer, Treves}) that the space of test functions $\mathscr{E}_{K} \defeq \mathcal{C}^{\infty}(K)$ ($K$: compact subset of $\R^{d}$), $\mathscr{E}\defeq \mathcal{C}^{\infty}(\R^{d})$, the rapidly decreasing functions $\mathscr{S}(\R^{d})$, and the space of harmonic functions $\mathcal{H}(U)$ ($U$: open subset of $\R^{d}$; see \cite{Pietsch}, Section 6.3),  are all  examples of Fr\'{e}chet nuclear spaces. Their (strong) dual spaces $\mathscr{E}'_{K}$, $\mathscr{E}'$, $\mathscr{S}'(\R^{d})$, $\mathcal{H}'(U)$, are also nuclear spaces. On the other hand, the space of test functions $\mathscr{D}(U) \defeq \mathcal{C}_{c}^{\infty}(U)$ ($U$: open subset of $\R^{d}$), the space of polynomials $\mathcal{P}_{n}$ in $n$-variables, the space of real-valued sequences $\R^{\N}$ (with direct sum topology) are strict inductive limits of Fr\'{e}chet nuclear spaces (hence they are also nuclear). The space of distributions  $\mathscr{D}'(U)$  ($U$: open subset of $\R^{d}$) is also nuclear. All the above are examples of (complete) ultrabornological nuclear spaces.

Before we introduce results on almost surely convergence of stochastic processes in $\Phi'$, we establish our version of Theorem \ref{theoAlmosSureUnifConver} for the ultrabornological nuclear space setting. 


\begin{theorem}\label{theoUltraborAlmosSureUnifConver}
Let $\Phi$ be an ultrabornological nuclear space and let $(X^{n}: n \in \N)$, with  $X^{n} =(X^{n}_{t}: t \geq 0)$, be a sequence of cylindrical process in $\Phi'$ satisfying:
\begin{enumerate}
\item For each $\phi \in \Phi$ the real-valued process $X^{n}(\phi)=( X^{n}_{t}(\phi): t \geq 0)$ is continuous (respectively c\`{a}dl\`{a}g).
\item For every $n \in \N$ and $t \geq 0$, the mapping $X^{n}_{t}: \Phi \rightarrow L^{0} \ProbSpace$ is continuous.  
\item For every $\phi \in \Phi$ and $T>0$, the sequence $X^{n}(\phi)(\omega)$ converges uniformly on $[0,T]$  for $\Prob$-a.e. $\omega \in \Omega$. 
\end{enumerate}
Then, there exists a weaker countably Hilbertian topology $\vartheta$ on $\Phi$ and some  $(\widehat{\Phi}_{\vartheta})'$-valued continuous (respectively c\`{a}dl\`{a}g) processes  $Y= (Y_{t}: t \geq 0)$ and  $Y^{n}= (Y^{n}_{t}: t \geq 0)$, $n \in \N$,  such that 
\begin{enumerate}[label=(\roman*)]
\item For every $\phi \in \Phi$ and $n \in \N$, the real-valued processes $\inner{Y^{n}}{\phi}$ and $X^{n}(\phi)$ are indistinguishable.
\item For every $T>0$ and for $\Prob$-a.e. $\omega \in \Omega$, there exists a $\vartheta$-continuous Hilbertian seminorm $p=p(T,\omega)$ on $\Phi$ such that $Y^{n}(\omega)$ converges to $Y(\omega)$ in $\Phi'_{p}$ uniformly on $[0,T]$. 
\end{enumerate}
Moreover, as  $\Phi'$-valued processes $Y$  and $Y^{n}$, $n \in \N$,  are continuous (respectively c\`{a}dl\`{a}g) processes and for $\Prob$-a.e. $\omega \in \Omega$, $Y^{n}(\omega)$ converges to $Y(\omega)$ in $\Phi'$ uniformly on $[0,T]$ for every $T>0$. 
\end{theorem}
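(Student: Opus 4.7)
The plan is to deduce Theorem \ref{theoUltraborAlmosSureUnifConver} from Theorem \ref{theoAlmosSureUnifConver} by strengthening hypothesis (2) using the ultrabornological assumption. The two theorems have identical conclusions and identical hypotheses (1) and (3); only hypothesis (2) is weakened here from equicontinuity of the family $(X^{n}_{t} : t \in [0,T])$ (for each $n$, $T$) to pointwise continuity of each single map $X^{n}_{t} \colon \Phi \to L^{0}\ProbSpace$. If, under these weaker assumptions plus the ultrabornological character of $\Phi$, I can still establish the equicontinuity required by Theorem \ref{theoAlmosSureUnifConver}, the result follows by a direct invocation of that theorem.

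To obtain the equicontinuity I would fix $n \in \N$ and $T > 0$ and apply the single-process analogue of Proposition \ref{propEquicontFourTransforms}, specialising it to the constant sequence $Z^{k} \equiv X^{n}$. Path continuity (hypothesis (1)) together with compactness of $[0,T]$ gives $\sup_{0 \le t \le T} |X^{n}_{t}(\phi)| < \infty$ almost surely for every $\phi$; pointwise continuity of $X^{n}_{t}$ (hypothesis (2)) supplies the sequential lower semicontinuity in $\phi$; and the ultrabornological character of $\Phi$ is precisely what the proof of Proposition \ref{propEquicontFourTransforms} uses (via Proposition 5.7 of \cite{FonsecaMora:Skorokhod}) to upgrade sequential lower semicontinuity of the auxiliary pseudo-seminorm
\begin{equation*}
V(\phi) = \int_{\Omega} \frac{\sup_{0 \le t \le T} |X^{n}_{t}(\phi)|}{1 + \sup_{0 \le t \le T} |X^{n}_{t}(\phi)|} \, d\Prob
\end{equation*}
to genuine continuity. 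The outcome is that for every $\epsilon > 0$ there exists a continuous seminorm $p_{\epsilon}$ on $\Phi$ with
\begin{equation*}
\int_{\Omega} \sup_{0 \le t \le T} \bigl| 1 - e^{i X^{n}_{t}(\phi)} \bigr| \, d\Prob \le \epsilon + 2\, p_{\epsilon}(\phi), \quad \forall \, \phi \in \Phi.
\end{equation*}

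From this bound I would then pass to equicontinuity of $(X^{n}_{t} : t \in [0,T])$ as linear maps into $L^{0}\ProbSpace$ by a standard Fourier-analytic estimate of the form $\Prob(|Y| > 1/s) \le c \bigl( 1 - \tfrac{1}{2s} \int_{-s}^{s} \operatorname{Re} \Exp(e^{iuY})\, du \bigr)$; the uniformity in $t$ is inherited directly from the supremum sitting inside the integral above. This yields hypothesis (2) of Theorem \ref{theoAlmosSureUnifConver}. In the c\`{a}dl\`{a}g case the same reasoning works because the supremum over $t$ of a c\`{a}dl\`{a}g path on $[0,T]$ is still measurable and a.s.\ finite, so the proof of Proposition \ref{propEquicontFourTransforms} adapts verbatim. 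Once the stronger form of hypothesis (2) is in place, Theorem \ref{theoAlmosSureUnifConver} applies directly and delivers all the stated conclusions, including the final statement on convergence in $\Phi'$. The main obstacle I anticipate is purely technical: making the passage from the Fourier-transform bound to $L^{0}$-equicontinuity uniform in $t \in [0,T]$; everything else is a clean packaging of Proposition \ref{propEquicontFourTransforms} together with an invocation of Theorem \ref{theoAlmosSureUnifConver}.
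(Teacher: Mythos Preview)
Your approach is correct and takes a genuinely different route from the paper's. The paper does \emph{not} verify the equicontinuity hypothesis of Theorem~\ref{theoAlmosSureUnifConver} and then invoke that theorem; instead it opens up the proof of Theorem~\ref{theoAlmosSureUnifConver}, observes that equicontinuity is used only once (to apply the regularization theorem, Theorem~3.2 of \cite{FonsecaMora:Existence}), and replaces that single citation by Corollary~3.11 of \cite{FonsecaMora:Existence}, which in the ultrabornological setting delivers the same regularized versions $Y^{n}$ from mere pointwise continuity of each $X^{n}_{t}$. The rest of the proof of Theorem~\ref{theoAlmosSureUnifConver} is then repeated verbatim. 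Your route is more modular: you upgrade hypothesis~(2) to the equicontinuity required by Theorem~\ref{theoAlmosSureUnifConver} and then call that theorem as a black box. This has the advantage of being self-contained within the paper (no need for the external Corollary~3.11) and, incidentally, of proving the stronger intermediate fact that under (1), (2) and the ultrabornological assumption the family $(X^{n}_{t}: t\in[0,T])$ is automatically equicontinuous. The paper's route is shorter in writing because it simply swaps a reference.

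One simplification to your argument: the Fourier-analytic detour you flag as the ``main obstacle'' is unnecessary. Once you have shown, via the proof of Proposition~\ref{propEquicontFourTransforms}, that the pseudo-seminorm
\[
V(\phi)=\int_{\Omega}\frac{\sup_{0\le t\le T}|X^{n}_{t}(\phi)|}{1+\sup_{0\le t\le T}|X^{n}_{t}(\phi)|}\,d\Prob
\]
is continuous on $\Phi$, equicontinuity of $(X^{n}_{t}:t\in[0,T])$ into $L^{0}\ProbSpace$ follows immediately from the elementary bound
\[
\sup_{0\le t\le T}\Prob\bigl(|X^{n}_{t}(\phi)|>\delta\bigr)\le\Prob\Bigl(\sup_{0\le t\le T}|X^{n}_{t}(\phi)|>\delta\Bigr)\le\frac{1+\delta}{\delta}\,V(\phi),
\]
with no need to pass through \eqref{eqPropEquicontFourTransforms} or the characteristic-function inequality. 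Your remark that the c\`{a}dl\`{a}g case requires only measurability and a.s.\ finiteness of $\sup_{t}|X^{n}_{t}(\phi)|$ is correct; nothing in the proof of Proposition~\ref{propEquicontFourTransforms} uses continuity of paths beyond this.
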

\begin{proof} The result follows from a minor modification of the proof of Theorem \ref{theoAlmosSureUnifConver} which we explain below. 

In effect, the hypothesis (2) in Theorem \ref{theoAlmosSureUnifConver} on equicontinuity for each $n \in \N$ and $T>0$ of the family $( X^{n}_{t}: t \in [0,T] )$ is only used in the application of the regularization theorem   (Theorem 3.2 in \cite{FonsecaMora:Existence}) to show the existence of a weaker countably Hilbertian topology $\theta_{n}$ on $\Phi$ and a $(\widehat{\Phi}_{\theta_{n}})'$-valued continuous process $Y^{n}=(Y^{n}_{t}: t \in [0,T])$ such that $\inner{Y^{n}}{\phi}$ and $X^{n}(\phi)$ are indistinguishable processes. However, if $\Phi$ is an ultrabornological nuclear space the same conclusions of the regularization theorem can be achieved by only assuming that each mapping $X^{n}_{t}: \Phi \rightarrow L^{0} \ProbSpace$ is continuous (see Corollary 3.11 in \cite{FonsecaMora:Existence}). The rest of the proof is similar to that of Theorem \ref{theoAlmosSureUnifConver}.
\end{proof}

We can apply Theorem \ref{theoUltraborAlmosSureUnifConver} to prove the following result on the almost uniform convergence for a sequence of stochastic processes taking values in the dual of an ultrabornological nuclear space.  

\begin{theorem}\label{theoProcessesAlmosSureUnifConver}
Let $\Phi$ be an ultrabornological nuclear space and let $(X^{n}: n \in \N)$, with  $X^{n} =(X^{n}_{t}: t \geq 0)$, be a sequence of  $\Phi'$-valued processes with continuous (respectively c\`{a}dl\`{a}g paths) with Radon probability distributions.  Assume further that for every $\phi \in \Phi$ and $T>0$, the sequence $\inner{X^{n}(\omega)}{\phi}$ converges uniformly on $[0,T]$  for $\Prob$-a.e. $\omega \in \Omega$. 

Then, there exists a $\Phi'$-valued process $Y= (Y_{t}: t \geq 0)$ with continuous (respectively c\`{a}dl\`{a}g paths) and Radon probability distributions, such that for $\Prob$-a.e. $\omega \in \Omega$ and each $T>0$, there exists a continuous Hilbertian seminorm $p=p(\omega,T)$ on $\Phi$ such that $X^{n}(\omega)$ converges to $Y(\omega)$ in $\Phi'_{p}$ uniformly on $[0,T]$.  
\end{theorem}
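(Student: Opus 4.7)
The plan is to reduce the theorem to Theorem \ref{theoUltraborAlmosSureUnifConver} by viewing each $\Phi'$-valued process $X^{n}$ as a cylindrical process through the prescription $X^{n}(\phi)=(\inner{X^{n}_{t}}{\phi}: t \geq 0)$, and then identifying the limit $\Phi'$-valued process $Y$ by combining conclusion $(ii)$ of that theorem with an indistinguishability argument based on Remark \ref{remaTheoAlmosSureUnifConver}. The Radon hypothesis on each marginal of $X^{n}$ is used twice: to verify the continuity hypothesis of Theorem \ref{theoUltraborAlmosSureUnifConver}, and to certify that each $X^{n}$ is regular so that Remark \ref{remaTheoAlmosSureUnifConver} applies.

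First I would verify the three hypotheses of Theorem \ref{theoUltraborAlmosSureUnifConver}. Hypotheses $(1)$ and $(3)$ are immediate from the path regularity and from the uniform convergence assumption respectively. For hypothesis $(2)$, requiring continuity of $X^{n}_{t}:\Phi\to L^{0}\ProbSpace$, I would use the Radonness of the law of $X^{n}_{t}$ on $\Phi'$: given $\varepsilon>0$, pick a compact set $K\subseteq\Phi'$ with $\Prob(X^{n}_{t}\notin K)<\varepsilon$. Since $\Phi$ is ultrabornological hence barrelled, the bounded set $K\subseteq \Phi'$ is equicontinuous, so its polar $K^{\circ}$ is a neighborhood of zero in $\Phi$. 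Then for $\phi\in K^{\circ}$ one has $\Prob(\abs{X^{n}_{t}(\phi)}>1)<\varepsilon$, which gives continuity of $X^{n}_{t}$ at zero and, by linearity, everywhere.

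Applying Theorem \ref{theoUltraborAlmosSureUnifConver}, I obtain a weaker countably Hilbertian topology $\vartheta$ on $\Phi$ together with $(\widehat{\Phi}_{\vartheta})'$-valued continuous (respectively c\`{a}dl\`{a}g) processes $Y$ and $Y^{n}$ satisfying conclusions $(i)$ and $(ii)$. To transfer conclusion $(ii)$ from $Y^{n}$ to $X^{n}$, I need that $Y^{n}$ and $X^{n}$ are indistinguishable as $\Phi'$-valued processes; by Remark \ref{remaTheoAlmosSureUnifConver} this holds once each $X^{n}$ is regular. Radonness implies regularity here, because a Radon probability measure on $\Phi'$ is concentrated on a countable union of compact sets, and in the nuclear setting every such compact set is contained in some $\Phi'_{p}$ for a continuous Hilbertian seminorm $p$, hence inside $(\widehat{\Phi}_{\theta})'$ for an appropriate weaker countably Hilbertian topology $\theta$ on $\Phi$.

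Once $X^{n}$ and $Y^{n}$ are identified, the claim that $X^{n}(\omega)\to Y(\omega)$ in $\Phi'_{p(\omega,T)}$ uniformly on $[0,T]$ for $\Prob$-a.e.\ $\omega$ and every $T>0$ follows directly from conclusion $(ii)$ of Theorem \ref{theoUltraborAlmosSureUnifConver}. Continuity (respectively c\`{a}dl\`{a}g regularity) of $Y$ as a $\Phi'$-valued process is inherited from $Y$ being $(\widehat{\Phi}_{\vartheta})'$-valued together with the continuity of the canonical inclusion $(\widehat{\Phi}_{\vartheta})'\hookrightarrow\Phi'$. For the Radon property of each $Y_{t}$, on the $\Prob$-full event where the uniform convergence holds on $[0,t]$ the value $Y_{t}(\omega)$ lies in the Polish Hilbert space $\Phi'_{p(\omega,t)}$, so after modifying $Y_{t}$ on a null set one can confine its range to a countable union of Hilbertian balls, which yields Radonness. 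The main obstacle I anticipate is this last step, since the seminorm $p$ depends on $\omega$; assembling countably many Hilbertian seminorms exhausting the almost sure support of $Y_{t}$, and checking the requisite measurability, is where most of the care is needed, though it should be routine given the ultrabornological nuclear structure.
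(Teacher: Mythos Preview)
Your approach is correct and matches the paper's proof almost step for step: reduce to Theorem~\ref{theoUltraborAlmosSureUnifConver}, use Radonness together with barrelledness to obtain continuity and regularity of each $X^{n}_{t}$, then identify $X^{n}$ with $Y^{n}$ via Remark~\ref{remaTheoAlmosSureUnifConver}. The obstacle you flag in the final step is illusory: you already recorded that $Y$ is $(\widehat{\Phi}_{\vartheta})'$-valued for a \emph{fixed} $\vartheta$ independent of $\omega$, so each $Y_{t}$ is regular by definition, and the paper simply invokes the converse direction of the same Radon/regular correspondence (Theorem~2.10 in \cite{FonsecaMora:Existence}) to conclude that the law of $Y_{t}$ is Radon---no assembly of $\omega$-dependent seminorms or measurability check is required.
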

\begin{proof}
First we verify that assumptions (1)-(3) in Theorem \ref{theoUltraborAlmosSureUnifConver} are satisfied. Observe that (3) is part of our assumptions. Assumption (1) is immediate since each $X^{n}$  has continuous (respectively c\`{a}dl\`{a}g) paths. Assumption (2) follows because being $\Phi$ ultrabornological it is barrelled, and hence our assumption that $X_{t}$ has a Radon probability distribution implies that the mapping $X^{n}_{t}: \Phi \rightarrow L^{0} \ProbSpace$ is continuous (see Theorem 2.10 in \cite{FonsecaMora:Existence}). 

Then by Theorem \ref{theoUltraborAlmosSureUnifConver} there exists a weaker countably Hilbertian topology $\vartheta$ on $\Phi$ and some  $(\widehat{\Phi}_{\vartheta})'$-valued continuous (respectively c\`{a}dl\`{a}g) processes  $Y= (Y_{t}: t \geq 0)$ and  $Y^{n}= (Y^{n}_{t}: t \geq 0)$, $n \in \N$, satisfying (i) and (ii) in Corollary \ref{theoUltraborAlmosSureUnifConver}. 

Observe first that being a $(\widehat{\Phi}_{\vartheta})'$-valued process $Y$ is a regular $\Phi'$-valued process and hence each $Y_{t}$ has a Radon probability distribution (see Theorem 2.10 in \cite{FonsecaMora:Existence}).

Let $n \in \N$ and $t \geq 0$. Since $Y^{n}_{t}$ is a $(\widehat{\Phi}_{\vartheta})'$-valued random variable it is regular as a $\Phi'$-valued random variable. Similarly, since $X^{n}_{t}$ has a Radon probability distribution and $\Phi$ is barrelled, then $X^{n}_{t}$ is regular (see Theorem 2.10 in \cite{FonsecaMora:Existence}). Hence by (i) and since both $Y^{n}$ and $X^{n}$ have continuous (respectively c\`{a}dl\`{a}g) paths, then by Proposition 2.12 in \cite{FonsecaMora:Existence} they are indistinguishable processes. Then, by (ii) we have for each $T>0$ that there exists a  $\theta$-continuous Hilbertian seminorm $p$ on $\Phi$ such that $X^{n}(\omega)$ converges to $Y(\omega)$ in $\Phi'_{p}$ uniformly on $[0,T]$ for $\Prob$-a.e. $\omega \in \Omega$. 
\end{proof}

As a direct consequence of  Theorem \ref{theoProcessesAlmosSureUnifConver} we obtain the following result which was proved by I. Mitoma in \cite{Mitoma:1983AS} in the nuclear Fr\'{e}chet setting. 

\begin{corollary}[Mitoma's Theorem]\label{coroMitomaTheorem}
Let $\Phi$ be either a nuclear Fr\'{e}chet space or a countable inductive limit of Fr\'{e}chet nuclear spaces. If a sequence $(X^{n}: n \in \N)$ of $\Phi'$-valued processes with continuous (respectively c\`{a}dl\`{a}g paths) is such that for every $\phi \in \Phi$ and $T>0$, $\inner{X^{n}(\omega)}{\phi}$ converges uniformly on $[0,T]$  for $\Prob$-a.e. $\omega \in \Omega$. Then there exists a $\Phi'$-valued process $Y= (Y_{t}: t \geq 0)$ with continuous (respectively c\`{a}dl\`{a}g paths) such that for $\Prob$-a.e. $\omega \in \Omega$ and each $T>0$, there exists a continuous Hilbertian seminorm $p=p(\omega,T)$ on $\Phi$ such that $X^{n}(\omega)$ converges to $Y(\omega)$ in $\Phi'_{p}$ uniformly on $[0,T]$ for $\Prob$-a.e. $\omega \in \Omega$. 
\end{corollary}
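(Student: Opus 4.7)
The plan is to obtain Mitoma's theorem as an immediate application of Theorem \ref{theoProcessesAlmosSureUnifConver}. That result applies to sequences of $\Phi'$-valued processes with Radon probability distributions on the dual of an ultrabornological nuclear space, so the task reduces to checking that the two classes of spaces in the corollary's hypothesis fall within that framework, and that the processes $X^{n}$ have the required measurability.

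First I would verify that $\Phi$ is ultrabornological nuclear in both cases. A nuclear Fr\'echet space is complete and metrizable; being metrizable it is bornological, and being bornological and complete (in fact Fr\'echet implies quasi-complete) it is ultrabornological. A countable inductive limit of nuclear Fr\'echet spaces is nuclear (nuclearity is preserved under countable inductive limits, see \cite{Pietsch, Treves}) and ultrabornological as the inductive limit of a family of ultrabornological spaces. In either case $\Phi$ is thus ultrabornological and nuclear, so the setting of Theorem \ref{theoProcessesAlmosSureUnifConver} is available.

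Second, I would argue that each $X^{n}_{t}$ has a Radon probability distribution on $\Phi'$. The key structural fact is that in both cases the strong dual $\Phi'$ is a Suslin (Lusin) space: this is classical for the strong dual of a separable nuclear Fr\'echet space, and it is inherited by countable inductive limits of such spaces. Since each $X^{n}$ has continuous (respectively c\`adl\`ag) paths in $\Phi'$, the random variable $X^{n}_{t}$ is $\Phi'$-valued and its law is a Borel probability measure on the Suslin space $\Phi'$; by Schwartz's theorem on Radon measures on Suslin spaces, every such measure is automatically Radon.

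Having verified these two conditions, together with the almost sure uniform convergence of each scalar process $\inner{X^{n}}{\phi}$ on bounded intervals which is assumed, all hypotheses of Theorem \ref{theoProcessesAlmosSureUnifConver} are met. That theorem then produces a $\Phi'$-valued process $Y=(Y_{t}:t\geq 0)$ with continuous (respectively c\`adl\`ag) paths and Radon distributions such that for $\Prob$-a.e.\ $\omega$ and every $T>0$ there exists a continuous Hilbertian seminorm $p=p(\omega,T)$ on $\Phi$ with $X^{n}(\omega)\to Y(\omega)$ in $\Phi'_{p}$ uniformly on $[0,T]$, which is exactly the conclusion of the corollary. The only non-bookkeeping step is the appeal to the Suslin property of $\Phi'$; everything else is a direct translation of the hypotheses into the framework of Theorem \ref{theoProcessesAlmosSureUnifConver}.
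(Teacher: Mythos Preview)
Your proposal is correct and follows essentially the same approach as the paper: verify that $\Phi$ is ultrabornological nuclear, that every Borel probability measure on $\Phi'$ is automatically Radon, and then invoke Theorem \ref{theoProcessesAlmosSureUnifConver}. The only cosmetic difference is that the paper obtains the Radon property by citing a reference (Corollary 1.3 in \cite{DaleckyFomin}), whereas you justify it via the Suslin property of $\Phi'$ and Schwartz's theorem; these are standard equivalent routes to the same fact.
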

\begin{proof}
If $\Phi$ is either a nuclear Fr\'{e}chet space or a countable inductive limit of Fr\'{e}chet nuclear spaces, then $\Phi$ is ultrabornological, and every Borel measure on $\Phi'$ is a Radon measure (see Corollary 1.3 of \cite{DaleckyFomin}, p.11). The result then follows from Theorem \ref{theoProcessesAlmosSureUnifConver}. 
\end{proof}

\subsection{Convergence in $L^{r}$ uniformly on bounded intervals}\label{subSectConInLrHilbert}

In the next result we introduce sufficient conditions for the convergence of a sequence of $\Phi'$-valued processes in $L^{r}(\Phi'_{\varrho})$ uniformly on a bounded interval of time $[0,T]$ for a continuous Hilbertian seminorm $\varrho$ on $\Phi$ (depending on $T>0$). 


\begin{theorem}\label{theoLrUniformConve}
Let $\Phi$ be a ultrabornological nuclear space and let $(X^{n}: n \in \N)$, with  $X^{n} =(X^{n}_{t}: t \geq 0)$, be a sequence of  $\Phi'$-valued processes with continuous (respectively c\`{a}dl\`{a}g paths)  with Radon probability distributions. 
Assume furthermore that: 
\begin{enumerate}
\item For every $\phi \in \Phi$ and $T>0$, the sequence $\inner{X^{n}(\omega)}{\phi}$ converges uniformly on $[0,T]$  for $\Prob$-a.e. $\omega \in \Omega$. 
\item There exists $r >1$ such that for every $\phi \in \Phi$ and $T>0$, 
$$\sup_{n \in \N}  \Exp \left( \sup_{0 \leq t \leq T} \abs{\inner{X^{n}_{t}}{\phi}}^{r} \right)< \infty.$$ 
\end{enumerate}
Then there exists a weaker countably Hilbertian topology $\vartheta$ on $\Phi$ and a $\Phi'$-valued process $Y= (Y_{t}: t \geq 0)$ with continuous (respectively c\`{a}dl\`{a}g) paths and Radon probability distributions, such that for each $T>0$ there exists a $\vartheta$-continuous Hilbertian seminorm $q=q(T)$ on $\Phi$ such that $\displaystyle{\lim_{n \rightarrow \infty} \Exp\left( \sup_{0 \leq t \leq T} q' (X^{n}_{t}-Y_{t})^{s} \right)=0}$ for every $1 \leq s<r$. 
\end{theorem}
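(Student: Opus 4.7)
My plan is to use Theorem \ref{theoProcessesAlmosSureUnifConver} to produce the candidate limit process and almost sure uniform convergence, and then upgrade the convergence to $L^{s}$ in a suitable Hilbert norm via Vitali's theorem after securing a uniform $L^{r}$-bound. Applying Theorem \ref{theoProcessesAlmosSureUnifConver} to $(X^{n})$ yields a weaker countably Hilbertian topology $\vartheta_{0}$ on $\Phi$ and a $\Phi'$-valued process $Y$ with continuous (resp.\ c\`{a}dl\`{a}g) paths and Radon distributions such that for $\Prob$-a.e.\ $\omega$ and every $T>0$, $X^{n}(\omega)\to Y(\omega)$ uniformly on $[0,T]$ in the strong topology of $\Phi'$.

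Fix $T>0$ and introduce
\[
W_{T}(\phi) \defeq \sup_{n \in \N}\,\Exp\!\left( \sup_{0 \leq t \leq T} \abs{\inner{X^{n}_{t}}{\phi}}^{r} \right)^{\!1/r}, \qquad \phi \in \Phi.
\]
Condition (2) makes $W_{T}$ finite and Minkowski's inequality makes it a seminorm. To show $W_{T}$ is continuous I verify sequential lower semicontinuity: for $\phi_{m}\to\phi$ in $\Phi$, the continuity of each $X^{n}_{t}(\omega)\in\Phi'$ in $\phi$ gives the pathwise estimate $\sup_{t}\abs{\inner{X^{n}_{t}(\omega)}{\phi}}\leq\liminf_{m}\sup_{t}\abs{\inner{X^{n}_{t}(\omega)}{\phi_{m}}}$, and Fatou's lemma combined with $\sup_{n}\liminf_{m}\leq\liminf_{m}\sup_{n}$ yields $W_{T}(\phi)\leq\liminf_{m}W_{T}(\phi_{m})$. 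Since $\Phi$ is ultrabornological, the argument invoked in the proof of Proposition \ref{propEquicontFourTransforms} (Proposition 5.7 of \cite{FonsecaMora:Skorokhod}) promotes sequential lower semicontinuity to continuity.

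Using nuclearity, pick continuous Hilbertian seminorms $\pi\leq q$ on $\Phi$ with $W_{T}\leq\pi$ and with $i_{\pi,q}$ nuclear (by composing two Hilbert--Schmidt inclusions available from nuclearity), so that $\sum_{k}\pi(\phi_{k}^{q})^{s}<\infty$ for every $s\geq 1$ and any CONS $(\phi_{k}^{q})\subset\Phi$ of $\Phi_{q}$. Adjoining the countable family $\{q(k):k\in\N\}$---obtained by the same construction for each integer $T=k$---to the generators of $\vartheta_{0}$ produces the desired countably Hilbertian topology $\vartheta$, and monotonicity of $W_{T}$ in $T$ ensures that $q(\lceil T\rceil)$ serves for arbitrary $T$. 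Expanding $q'(X^{n}_{t})^{2}=\sum_{k}\abs{\inner{X^{n}_{t}}{\phi_{k}^{q}}}^{2}$ and moving $\sup_{t}$ inside the series, I bound $\Exp[\sup_{t}q'(X^{n}_{t})^{r}]$ by Minkowski's inequality in $L^{r/2}$ when $r\geq 2$ and by the subadditivity of $x\mapsto x^{r/2}$ when $1<r<2$; each case reduces to a convergent series of the form $\sum_{k}\pi(\phi_{k}^{q})^{\min(r,2)}$, giving $\sup_{n}\Exp[\sup_{t}q'(X^{n}_{t})^{r}]<\infty$. Fatou transfers the same bound to $Y$.

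Consequently $X^{n}_{t}(\omega),Y_{t}(\omega)\in\Phi'_{q}$ for all $t\in[0,T]$ almost surely. The strong-topology uniform convergence from the first step then becomes, on the a.s.\ event that all values lie in $\Phi'_{q}$, the identity $\sup_{0\leq t\leq T}q'(X^{n}_{t}-Y_{t})=\sup_{t}\eta_{B_{q}}(X^{n}_{t}-Y_{t})\to 0$ a.s. Combined with the uniform $L^{r}$-bound on $\sup_{t}q'(X^{n}_{t}-Y_{t})$, the family $\{(\sup_{t}q'(X^{n}_{t}-Y_{t}))^{s}:n\in\N\}$ is uniformly integrable for every $1\leq s<r$, and Vitali's theorem yields $\Exp[\sup_{t}q'(X^{n}_{t}-Y_{t})^{s}]\to 0$. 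The main hurdle is the continuity of $W_{T}$ together with securing enough summability in the nuclear embedding $i_{\pi,q}$ to close the bound uniformly across both the regime $r\geq 2$ and the regime $1<r<2$.
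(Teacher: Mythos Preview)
Your overall plan---define the seminorm $W_{T}$, prove it is continuous via sequential lower semicontinuity and ultrabornology, then secure a uniform $L^{r}$-bound in some $\Phi'_{q}$ and finish with uniform integrability---matches the paper's strategy. But there is a genuine gap in how you obtain the $L^{r}$-bound.

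You expand $q'(X^{n}_{t})^{2}=\sum_{k}\abs{\inner{X^{n}_{t}}{\phi_{k}^{q}}}^{2}$ and conclude from finiteness of the right-hand side that $X^{n}_{t}(\omega)\in\Phi'_{q}$. This implication fails: for a general $f\in\Phi'$ the condition $\sum_{k}\abs{\inner{f}{\phi_{k}^{q}}}^{2}<\infty$ does \emph{not} force $f$ to be $q$-continuous, because the $\phi_{k}^{q}$ are only total in $\Phi_{q}$, not in $\Phi$. So your series bound controls $\sum_{k}\abs{\inner{X^{n}_{t}}{\phi_{k}^{q}}}^{2}$ but not $\eta_{B_{q}(1)}(X^{n}_{t})=q'(X^{n}_{t})$, and hence you cannot conclude the uniform integrability of $\sup_{t}q'(X^{n}_{t}-Y_{t})^{s}$. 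Applying Theorem \ref{theoProcessesAlmosSureUnifConver} gives only $X^{n}_{t}(\omega)\in\Phi'_{p(\omega,T)}$ with $p$ depending on $\omega$, which you cannot dominate by your fixed $q$.

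The paper closes exactly this gap by first observing (as you do) that Markov's inequality makes every $X^{n}_{t}:\Phi\to L^{0}$ continuous with respect to the \emph{single} seminorm $W_{T}$, and then invoking Theorem \ref{theoAlmosSureUnifConverSingleHilbertSpace} rather than Theorem \ref{theoProcessesAlmosSureUnifConver}. That theorem produces a fixed $\varrho$ (independent of $\omega$) with $i_{W_{T},\varrho}$ Hilbert--Schmidt such that each $X^{n}$ has a $\Phi'_{\varrho}$-valued indistinguishable version and $X^{n}(\omega)\to Y(\omega)$ in $\Phi'_{\varrho}$ uniformly on $[0,T]$ a.s. With $X^{n}_{t}\in\Phi'_{\varrho}$ now guaranteed, the paper then takes $q\geq\varrho$ with $i_{\varrho,q}$ Hilbert--Schmidt and uses that $i'_{\varrho,q}$ is $r$-summing together with the Pietsch domination theorem to obtain $\sup_{n}\Exp[\sup_{t}q'(X^{n}_{t})^{r}]\leq C^{r}\int_{B^{*}_{\varrho}(1)}W_{T}(\phi)^{r}\,\nu(d\phi)<\infty$. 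Your Minkowski/subadditivity estimate could in principle substitute for Pietsch once the $\Phi'_{\varrho}$-valuedness is in place, but note also that your claim ``$\sum_{k}\pi(\phi_{k}^{q})^{s}<\infty$ for every $s\geq 1$ and any CONS'' is incorrect for $s<2$; this sum is basis-independent only for $s=2$, so for $1<r<2$ you must choose the singular-vector basis of $i_{\pi,q}$.
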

\begin{proof}
We prove first that the result holds for a sequence of cylindrical processes $(X^{n}: n \in \N)$ defined on a bounded interval of time $[0,T]$ and under the assumption that each $(X^{n}_{t}: t \in [0,T])$ has continuous paths. 

Let $T>0$ and define $p: \Phi \rightarrow [0,\infty)$ by 
\begin{equation}\label{eqDefiSeminormUnifConvLr}
p(\phi)= \sup_{n \in \N} \left[ \Exp \left( \sup_{0 \leq t \leq T} \abs{\inner{X^{n}_{t}}{\phi}}^{r} \right) \right]^{1/r}, \quad \forall \phi \in \Phi. 
\end{equation}
By assumption (2) it is clear that $p(\phi)< \infty$ for each $\phi \in \Phi$. Moreover one can easily check that $p$ defines a seminorm on $\Phi$. We will prove that $p$ is continuous. We start by showing it is sequentially lower semicontinuous. 

In effect, let $(\phi_{k}: k \in \N)$ be a sequence converging to $\phi$  in $\Phi$, since for each $n \in \N$ the function $\varphi \mapsto  \sup_{0 \leq t \leq T} \abs{\inner{X^{n}_{t}}{\varphi}}^{r}$ is lower semicontinuous, then by Fatou's lemma we have that
\begin{eqnarray*}
p(\phi)^{r} & \leq & \sup_{n \in \N} \Exp \left( \liminf_{k \rightarrow \infty} \sup_{0 \leq t \leq T} \abs{\inner{X^{n}_{t}}{\phi_{k}}}^{r} \right) \\
& \leq & \sup_{n \in \N} \liminf_{k \rightarrow \infty} \Exp \left( \sup_{0 \leq t \leq T} \abs{\inner{X^{n}_{t}}{\phi_{k}}}^{r} \right) \\
& \leq & \liminf_{k \rightarrow \infty} p(\phi_{k})^{r}. 
\end{eqnarray*}
Then, $p$ is sequentially lower semicontinuous and because $\Phi$ is ultrabornological by Proposition 5.7 in \cite{FonsecaMora:Skorokhod} we have that $p$ is continuous. 

Now, from Markov and Jensen inequalities, for any $n \in \N$, $\epsilon >0$ and $\phi \in \Phi$, 
$$ \Prob \left( \omega : \sup_{0 \leq t \leq T} \abs{\inner{X^{n}_{t}(\omega)}{\phi}} > \epsilon \right) 
\leq \frac{1}{\epsilon} \left[  \Exp \left( \sup_{0 \leq t \leq T} \abs{\inner{X^{n}_{t}}{\phi}}^{r} \right) \right]^{1/r} \leq \frac{1}{\epsilon} p(\phi). $$
Hence for each $n \in \N$ and $t \in [0,T]$, the mapping $X^{n}_{t}: \Phi \rightarrow L^{0} \ProbSpace$ is $p$-continuous. Then by Theorem \ref{theoAlmosSureUnifConverSingleHilbertSpace} there exists a continuous Hilbertian seminorm $\varrho$ on $\Phi$, $p \leq \varrho$, such that $i_{p,\varrho}$ is Hilbert-Schmidt and  a $\Phi'_{\varrho}$-valued continuous  processes  $Y= (Y_{t}: t \in [0,T])$ such that for each $n \in \N$, $X^{n}$ has a $\Phi'_{\varrho}$-valued continuous indistinguishable version (which we denote again by $X^{n}$), and for $\Prob$-a.e. $\omega \in \Omega$, $X^{n}(\omega)$ converges to $Y(\omega)$ in $\Phi'_{\varrho}$ uniformly on $[0,T]$. 

Let $q$ be a continuous Hilbertian seminorm on $\Phi$ such that $\varrho \leq q$ and $i_{\varrho,q}$ is Hilbert-Schmidt. The dual operator $i'_{\varrho,q}: \Phi'_{\varrho} \rightarrow \Phi'_{q}$ is Hilbert-Schmidt and therefore is $r$-summing (see \cite{DiestelJarchowTonge}, Corollary 4.13, p.85). Then from the Pietsch domination theorem (see \cite{DiestelJarchowTonge}, Theorem 2.12, p.44) there exists a constant $C>0$ and a Radon probability measure $\nu$ on the unit ball $B^{*}_{\varrho}(1)$ of $\Phi_{\varrho}$ (equipped with the weak topology) such that, 
\begin{equation}\label{eqIneqPietschDomination}
q'(i'_{\varrho,q} f) \leq C \cdot \left( \int_{B^{*}_{\varrho}(1)} \, \abs{\inner{f}{\phi}}^{r} \, \nu(d\phi) \right)^{1/r} \quad \forall \, f \in \Phi'_{\varrho}.
\end{equation}
Now, for each $n \in \N$, since $X^{n}$ has a $\Phi'_{\varrho}$-valued continuous indistinguishable version and since $p \leq \varrho$, then it follows from \eqref{eqIneqPietschDomination} that 
\begin{eqnarray*}
\sup_{n \in \N} \Exp \left( \sup_{0 \leq t \leq T} q'(i'_{\varrho,q} X^{n}_{t})^{r}   \right) 
& \leq & C^{r} \sup_{n \in \N} \Exp \left( \sup_{0 \leq t \leq T} \, \int_{B^{*}_{\varrho}(1)} \, \abs{\inner{X^{n}_{t}}{\phi}}^{r} \, \nu(d\phi) \right) \\
& \leq & C^{r} \, \int_{B^{*}_{\varrho}(1)} \, \sup_{n \in \N}  \Exp \left( \sup_{0 \leq t \leq T} \abs{\inner{X^{n}_{t}}{\phi}}^{r} \right) \, \nu(d\phi)  \\
& = & C^{r} \, \int_{B^{*}_{\varrho}(1)} \, p(\phi)^{r} \, \nu(d\phi)  < \infty. 
\end{eqnarray*}
Thus if we identify $Y$ and each $X^{n}$ with their images in $\Phi'_{q}$ under the mapping  $i'_{\varrho,q}$ (which is injective), then we have $Y$ and each $X^{n}$ is a $\Phi'_{q}$-valued process with continuous paths, for $\Prob$-a.e. $\omega \in \Omega$, $X^{n}(\omega)$ converges to $Y(\omega)$ in $\Phi'_{q}$ uniformly on $[0,T]$, and by our calculations above we have 
$\sup_{n \in \N} \Exp \left( \sup_{0 \leq t \leq T} q'(X^{n}_{t})^{r}   \right) < \infty$. By an application of Fatou's lemma we can prove that  $\Exp \left( \sup_{0 \leq t \leq T} q'(Y_{t})^{r}   \right) < \infty$. Then we have for every $t \in [0,T]$ that $Y_{t},  X^{n}_{t} \in L^{r} (\Omega, \mathcal{F}, \Prob; \Phi'_{q})$. 

Let $1 \leq s<r$. For every $t \in [0,T]$ we have that $Y_{t},  X^{n}_{t} \in L^{s} (\Omega, \mathcal{F}, \Prob; \Phi'_{q})$. Moreover, since  
\begin{multline*}
\sup_{n \in \N} \Exp \left( \sup_{0 \leq t \leq T} q'(X^{n}_{t}-Y_{t})^{r}   \right) \\
\leq 2^{r} \left( \sup_{n \in \N} \Exp \left( \sup_{0 \leq t \leq T} q'(X^{n}_{t})^{r}   \right)
+  \Exp \left( \sup_{0 \leq t \leq T} q'(Y_{t})^{r}   \right) \right)
< \infty,
\end{multline*}
we have by Theorem 4.5.9 in (\cite{BogachevMT}, p.272) that the family $\displaystyle{\left( \sup_{0 \leq t \leq T} q'(X^{n}_{t}-Y_{t})^{s}:  n \in \N \right)}$ is uniformly integrable. Because $\displaystyle{ \lim_{n \rightarrow \infty} \sup_{0 \leq t \leq T} q'(X^{n}_{t}(\omega)-Y(\omega))^{s}=0}$ for $\Prob$-a.e. $\omega \in \Omega$, it then follows from Theorem 4.5.4 in (\cite{BogachevMT}, p.268) that $\displaystyle{\lim_{n \rightarrow \infty} \Exp\left( \sup_{0 \leq t \leq T} q' (X^{n}_{t}-Y_{t})^{s} \right)=0}$. This proves that the result holds in a bounded interval of time $[0,T]$. 

For the general case, let $(T_{k}: k \in \N)$ be an increasing sequence of positive numbers such that $\lim_{k \rightarrow \infty} T_{k} = \infty$. Then, for each $k \in \N$ there exists a continuous Hilbertian seminorm $q_{k}$ on $\Phi$ and a $\Phi'_{q_{k}}$-valued continuous process $Y^{(k)}= (Y^{(k)}_{t}: 0 \leq t \leq T_{k})$ with continuous paths such that $\displaystyle{\lim_{n \rightarrow \infty} \Exp\left( \sup_{0 \leq t \leq T_{k}} q' \left( X^{n}_{t}-Y^{(k)}_{t} \right)^{s} \right)=0}$ for every $1 \leq s<r$. 

We can assume without loss of generality that the sequence $(q_{k}: k \in N)$ is increasing. Let $\theta$ denotes the countably Hilbertian topology on $\Phi$ generated by the sequence $(q_{k}: k \in N)$. If we define $Y=(Y_{t}: t \geq 0)$ by the prescription $Y_{t}=Y^{(k)}_{t}$ if $t \in [0,T_{k}]$, then by uniqueness of limits of the convergence in each bounded interval of time we can conclude that $Y$ is a $(\widehat{\Phi}_{\vartheta})'$-valued  continuous process; hence a $\Phi'$-valued regular continuous process and thus  by Theorem 2.10 in \cite{FonsecaMora:Existence} each $Y_{t}$ has a Radon probability distribution. 

Finally, given $T>0$, if we choose $k \in \N$ such that $T_{k}>T$, then we have
$$\lim_{n \rightarrow \infty} \Exp\left( \sup_{0 \leq t \leq T} q' (X^{n}_{t}-Y_{t})^{s} \right) \leq \lim_{n \rightarrow \infty} \Exp\left( \sup_{0 \leq t \leq T_{k}} q' \left( X^{n}_{t}-Y^{(k)}_{t} \right)^{s} \right)=0,$$
for every $1 \leq s<r$.  
\end{proof}

\subsection{Convergence of Series of Independent C\`{a}dl\`{a}d Processes}\label{subSectConvSeriesCadProcess}

In this section we apply our result in Theorem \ref{theoProcessesAlmosSureUnifConver} to show that the  partial sums of a sequence of independent c\`{a}dl\`{a}d processes converges almost uniformly in bounded intervals of time provided we have convergence of finite dimensional distributions. Fundamental to our result is the work of Basse'O-Connor and Rosi\'{n}ski in \cite{BasseOConnorRosinski:2013}. 

\begin{theorem}\label{theoItoNisioSkorokhod}
Let $\Phi$ be a ultrabornological nuclear space and let $(X^{n}: n \in \N)$ be a sequence of independent $\Phi'$-valued c\`{a}dl\`{a}g processes  with Radon probability distributions. For every $n \in \N$, let $S^{n}=\sum_{k=1}^{n} X^{k}$. Assume that for every $\phi \in \Phi$, the following two conditions holds:
\begin{enumerate}
\item For every $n \in \N$, $\inner{X^{n}}{\phi}$ is symmetric. 
\item There exists a real-valued  c\`{a}dl\`{a}g process $Z^{\phi}$, such that for every $t_{1}, t_{2}, \dots, t_{m} \geq 0$, 
\begin{equation}\label{eqConvDistItoNisio}
(\inner{S^{n}_{t_{1}}}{\phi}, \dots, \inner{S^{n}_{t_{m}}}{\phi} ) \overset{d}{\rightarrow} (Z^{\phi}_{t_{1}}, \dots, Z^{\phi}_{t_{m}}).
\end{equation}
\end{enumerate}
Then, there exists  a $\Phi'$-valued c\`{a}dl\`{a}g process $S$ with Radon probability distributions such that for $\Prob$-a.e. $\omega \in \Omega$, $S^{n}(\omega)$ converges to $S(\omega)$ in $\Phi'$ uniformly on $[0,T]$ for every $T>0$. 
\end{theorem}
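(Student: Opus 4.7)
The plan is to reduce Theorem \ref{theoItoNisioSkorokhod} to Theorem \ref{theoProcessesAlmosSureUnifConver} by establishing, for each fixed test function $\phi \in \Phi$, the almost sure uniform convergence on bounded intervals of the real-valued partial sums $\inner{S^{n}}{\phi}$. The main input for this one-dimensional reduction is the Itô-Nisio-type theorem of Basse-O'Connor and Rosiński from \cite{BasseOConnorRosinski:2013}, applied in the scalar setting.

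First I would verify that each $S^{n}$ is a $\Phi'$-valued c\`{a}dl\`{a}g process with Radon probability distribution. C\`{a}dl\`{a}g paths are preserved under finite sums. For the Radon property, note that addition $\Phi' \times \Phi' \to \Phi'$ is continuous and the product of Radon probability measures on $\Phi' \times \Phi'$ is Radon; pushing forward by addition yields a Radon measure on $\Phi'$. Inductively this gives the Radon property of the law of $S^{n}$ for each $n \in \N$.

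Next, fix $\phi \in \Phi$. By the independence of $(X^{n}: n \in \N)$, the real-valued c\`{a}dl\`{a}g processes $(\inner{X^{n}}{\phi}: n \in \N)$ are independent, and by hypothesis (1) each of them is symmetric. Hypothesis (2) says precisely that the finite-dimensional distributions of the partial sums $\inner{S^{n}}{\phi}$ converge to those of the real-valued c\`{a}dl\`{a}g process $Z^{\phi}$. The theorem of Basse-O'Connor and Rosiński (applied in the separable Banach space $\R$) then implies that $\inner{S^{n}}{\phi}$ converges almost surely uniformly on $[0,T]$ for every $T>0$, with limit a modification of $Z^{\phi}$.

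Finally, the sequence $(S^{n})$ meets all hypotheses of Theorem \ref{theoProcessesAlmosSureUnifConver}: it consists of $\Phi'$-valued c\`{a}dl\`{a}g processes with Radon distributions, and for each $\phi \in \Phi$ and $T>0$ we have just shown that $\inner{S^{n}(\omega)}{\phi}$ converges uniformly on $[0,T]$ for $\Prob$-a.e.\ $\omega$. Theorem \ref{theoProcessesAlmosSureUnifConver} then delivers a $\Phi'$-valued c\`{a}dl\`{a}g process $S$ with Radon probability distributions and, for $\Prob$-a.e.\ $\omega$ and each $T>0$, a continuous Hilbertian seminorm $p = p(\omega,T)$ on $\Phi$ such that $S^{n}(\omega) \to S(\omega)$ in $\Phi'_{p}$ uniformly on $[0,T]$; continuity of the inclusion $\Phi'_{p} \hookrightarrow \Phi'$ yields the claimed uniform convergence in $\Phi'$. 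The only genuinely nontrivial step is the invocation of the Basse-O'Connor–Rosiński scalar result; everything else consists in verifying that the hypotheses of the already established Theorem \ref{theoProcessesAlmosSureUnifConver} transfer from individual test functions to the $\Phi'$-valued setting.
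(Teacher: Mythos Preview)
Your proposal is correct and follows essentially the same approach as the paper: for each $\phi \in \Phi$ apply the Basse-O'Connor--Rosi\'{n}ski theorem \cite{BasseOConnorRosinski:2013} to the symmetric independent real-valued summands $\inner{X^{n}}{\phi}$ to obtain almost sure uniform convergence of $\inner{S^{n}}{\phi}$, observe that each $S^{n}$ is c\`{a}dl\`{a}g with Radon laws, and then invoke Theorem \ref{theoProcessesAlmosSureUnifConver}. The paper handles the Radon property of $S^{n}$ by citing \cite{FonsecaMora:Skorokhod} and adds the remark that the result in \cite{BasseOConnorRosinski:2013} is stated on $[0,1]$ but extends to arbitrary bounded intervals, but otherwise the arguments coincide.
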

\begin{proof}
For each $\phi \in \Phi$, observe that $\inner{X^{n}}{\phi}$ is a sequence of independent real-valued c\`{a}dl\`{a}g processes  with symmetric probability distributions and $\inner{S^{n}}{\phi}=\sum_{k=1}^{n} \inner{X^{k}}{\phi}$. By our assumption in \eqref{eqConvDistItoNisio} and Theorem 2.1 in \cite{BasseOConnorRosinski:2013} there exists a real-valued c\`{a}dl\`{a}g process $S^{\phi}$ such that almost surely $\inner{S^{n}}{\phi} \rightarrow S^{\phi}$ uniformly on $[0,T]$ for every $T>0$. Here it is worth to remark that the although the result in \cite{BasseOConnorRosinski:2013} is formulated for  c\`{a}dl\`{a}g mappings defined on the time interval $[0,1]$, it can be extended to the case of c\`{a}dl\`{a}g mappings defined any bounded interval $[0,T]$ and therefore to the case of c\`{a}dl\`{a}g mappings defined on $[0, \infty)$. 

Now for each $n \in \N$, $S^{n}$ defines a $\Phi'$-valued c\`{a}dl\`{a}g process with Radon probability distributions (Section 4.2 in \cite{FonsecaMora:Skorokhod}). Then, by  Theorem \ref{theoProcessesAlmosSureUnifConver} there exists a $\Phi'$-valued c\`{a}dl\`{a}g process $S$ with Radon probability distributions such that for $\Prob$-a.e. $\omega \in \Omega$, $S^{n}(\omega)$ converges to $S(\omega)$ in $\Phi'$ uniformly on $[0,T]$ for every $T>0$. 
\end{proof}

\begin{remark}
Assume that the sequence $(X^{n}: n \in \N)$ is as in Theorem \ref{theoItoNisioSkorokhod} with the exception that for each $\phi \in \Phi$, each $\inner{X^{n}}{\phi}$ is symmetric. Then, in this case Theorem 2.1 in \cite{BasseOConnorRosinski:2013} shows that there exists a real-valued c\`{a}dl\`{a}g process $S^{\phi}$ and a sequence $y^{n,\phi}$ of real-valued c\`{a}dl\`{a}g processes with the property that $y^{n,\phi}_{t} \rightarrow 0$ for every $t \in [0,T]$, such that almost surely $\inner{S^{n}}{\phi} +y^{n,\phi} \rightarrow S^{\phi}$ uniformly on $[0,T]$. Define $Y^{n}_{t}(\phi)=y_{t}^{n,\phi}$ for each $n \in \N$ and $\phi \in \Phi$. If we where able to show that for each $t$ the mapping $\phi \mapsto Y^{n}_{t}(\phi)$ is linear and continuous, then by  the regularization theorem   (Theorem 3.2 in \cite{FonsecaMora:Existence}) we would have that each $Y^{n}$ determines a $\Phi'$-valued c\`{a}dl\`{a}g process with Radon distributions and then by the same arguments used in the proof of Theorem \ref{theoItoNisioSkorokhod} we would have that for $\Prob$-a.e. $\omega \in \Omega$, $S^{n}(\omega)+Y^{n}(\omega)$ converges to $S(\omega)$ in $\Phi'$ uniformly on $[0,T]$ for every $T>0$. 
\end{remark}


\section{Almost Sure Uniform Convergence of L\'{e}vy Processes and Stochastic Evolution Equations}\label{sectConvLevySEE}

Al through this section $\Phi$ denotes a nuclear space which is also quasi-complete and bornological (hence ultrabornological, by Theorem 13.2.12 in \cite{NariciBeckenstein}. 

\subsection{L\'{e}vy Processes and Stochastic Evolution Equations}

Recall from \cite{FonsecaMora:Levy} that a $\Phi'$-valued process $L=( L_{t} : t\geq 0)$ is called a \emph{L\'{e}vy process} if \begin{inparaenum}[(i)] \item  $L_{0}=0$ a.s., 
\item $L$ has \emph{independent increments}, i.e. for any $n \in \N$, $0 \leq t_{1}< t_{2} < \dots < t_{n} < \infty$ the $\Phi'$-valued random variables $L_{t_{1}},L_{t_{2}}-L_{t_{1}}, \dots, L_{t_{n}}-L_{t_{n-1}}$ are independent,  
\item L has \emph{stationary increments}, i.e. for any $0 \leq s \leq t$, $L_{t}-L_{s}$ and $L_{t-s}$ are identically distributed, and  
\item For every $t \geq 0$ the distribution $\mu_{t}$ of $L_{t}$ is a Radon measure and the mapping $t \mapsto \mu_{t}$ from $\R_{+}$ into the space $\goth{M}_{R}^{1}(\Phi')$ of Radon probability measures on $\Phi'$ is continuous at $0$ when $\goth{M}_{R}^{1}(\Phi')$  is equipped with the weak topology. \end{inparaenum} It is shown in Corollary 3.11 in \cite{FonsecaMora:Levy} that $L=( L_{t} : t\geq 0)$ has a regular, c\`{a}dl\`{a}g version $\tilde{L}=( \tilde{L}_{t} : t \geq 0)$ that is also a L\'{e}vy process. Moreover, there exists a weaker countably Hilbertian topology $\vartheta$ on $\Phi$ such that $\tilde{L}$ is a $(\widehat{\Phi}_{\vartheta})'$-valued c\`{a}dl\`{a}g process. We will therefore identify $L$ with $\tilde{L}$. 

In \cite{FonsecaMora:SEELevy}, the author introduced a theory of existence and uniqueness, path properties, and weak converge of linear stochastic evolution equations with L\'{e}vy noise. In the following paragraphs we summarize the main results we will need for our result in the next section. We start with some terminology. 

A family $(U(s,t): 0 \leq s \leq t < \infty) \subseteq \mathcal{L}(\Phi,\Phi)$ is a \emph{backward evolution system} if $U(t,t)=I$, $U(s,t)=U(s,r)U(r,t)$, $0 \leq s \leq r \leq t$. It is furthermore called \emph{strongly continuous} if for every $\phi \in \Phi$, $s,t \geq 0$, the mappings $[s,\infty) \ni r \mapsto U(s,r)\phi$ and $[0,t] \ni r \mapsto U(r,t)\phi$ are continuous. It is said that the family $A=(A_{t}:t \geq 0) \subseteq \mathcal{L}(\Phi,\Phi)$ \emph{generates} $(U(s,t): 0 \leq s \leq t < \infty)$ if the following forward and backward relations are satisfied: 
\begin{equation}\label{eqForwardEquationDeriv}
\frac{d}{dt} U(s,t)\phi = U(s,t)A(t) \phi, \quad \forall \, \phi \in \Phi, \, s \leq t.
\end{equation}  
\begin{equation} \label{eqBackwardEquationDeriv}
\frac{d}{ds} U(s,t)\phi = -A(s)U(s,t) \phi, \quad \forall \, \phi \in \Phi, \, s \leq t.
\end{equation}

The backward evolution system $(U(s,t): 0 \leq s \leq t < \infty)$ is called $(C_{0},1)$ if for each $T>0$ and each continuous seminorm $p$ on $\Phi$ there exists some $\vartheta_{p} \geq 0$ and a continuous seminorm $q$ on $\Phi$ such that  $ p(U(s,t)\phi) \leq e^{\vartheta_{p} (t-s)} q(\phi)$, for all $0 \leq s \leq t \leq T$ and $\phi \in \Phi$.  


Consider the following \emph{(generalized) Langevin equation} with L\'{e}vy noise: 
\begin{equation}\label{eqGenLanEquaLevy}
d Y_{t}=A(t)' Y_{t}dt + dL_{t}, \quad t \geq 0, 
\end{equation}
with initial condition $Y_{0}=\eta$ $\Prob$-a.e., where $\eta$ is a $\mathcal{F}_{0}$-measurable $\Phi'$-valued regular random variable. A \emph{weak solution} to \eqref{eqGenLanEquaLevy} is a $\Phi'$-valued regular adapted process $Y=(Y_{t}: t \geq 0)$ satisfying that for any given $t \geq 0$, for each $\phi \in \Phi$ we have $\int_{0}^{t} \abs{\inner{Y_{s}}{A(s)\phi}} ds< \infty$ $\Prob$-a.e. and  
\begin{equation*}
\inner{Y_{t}}{\phi}=\inner{\eta}{\phi} +\int_{0}^{t} \inner{Y_{s}}{A(s)\phi} ds+\inner{L_{t}}{\phi}. 
\end{equation*}

Suppose that the mapping $t \mapsto A(t) \psi$ is continuous from $[0,\infty)$ into $\Phi$ for every $\phi \in \Phi$. By Theorem 5.7 in \cite{FonsecaMora:SEELevy}, the generalized Langevin equation \eqref{eqGenLanEquaLevy} has a unique (up to indistinguishable versions) weak solution $(X_{t}: t \geq 0)$ which is regular and has c\`{a}dl\`{a}g paths, this solution satisfies $\Prob$-a.e.
\begin{equation*} \label{eqDefiCadlagVersMildSoluLevy}
\inner{X_{t}}{\phi}=\inner{\eta}{U(0,t)\phi}+\int_{0}^{t} \inner{L_{s}}{A(s)U(s,t)\phi} ds + \inner{L_{t}}{\phi}, \quad \forall \, t \geq 0, \, \phi \in \Phi.
\end{equation*} 
Moreover if $(L_{t}: t\geq 0)$ has continuous paths, then $(X_{t}: t \geq 0)$ has continuous paths too. For further details and examples the reader is referred to \cite{FonsecaMora:SEELevy}.

\subsection{Convergence of Solutions to Stochastic Evolution Equations}

In this section we prove some applications of our previous results to the convergence of L\'{e}vy processes and of linear stochastic evolution equations driven by L\'{e}vy noise. 

We start with the following result which shows that the collection of all the $\Phi'$-valued L\'{e}vy processes is closed under sequential limits under weakly (in the duality sense) almost surely uniform convergence. 

\begin{theorem}\label{theoSequeLimitLevy}
Let $\Phi$ be an ultrabornological nuclear space and let $(L^{n}: n \in \N)$, with  $L^{n} =(L^{n}_{t}: t \geq 0)$, be a sequence of  $\Phi'$-valued L\'{e}vy processes.  Assume further that for every $\phi \in \Phi$ and $T>0$, the sequence $\inner{L^{n}(\omega)}{\phi}$ converges uniformly on $[0,T]$  for $\Prob$-a.e. $\omega \in \Omega$. 

Then, there exists a $\Phi'$-valued L\'{e}vy process $Y= (Y_{t}: t \geq 0)$ such that for $\Prob$-a.e. $\omega \in \Omega$ and each $T>0$, there exists a continuous Hilbertian seminorm $p=p(\omega,T)$ on $\Phi$ such that $L^{n}(\omega)$ converges to $Y(\omega)$ in $\Phi'_{p}$ uniformly on $[0,T]$.  
\end{theorem}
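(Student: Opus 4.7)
The plan is to deduce this from Theorem \ref{theoProcessesAlmosSureUnifConver} by first extracting a $\Phi'$-valued limit process $Y$ with the stated uniform convergence, and then verifying that $Y$ inherits the four defining properties of a L\'{e}vy process. In order to apply Theorem \ref{theoProcessesAlmosSureUnifConver} I need each $L^{n}$ to be a $\Phi'$-valued c\`{a}dl\`{a}g process with Radon distributions; this is exactly the content of the regularization result for $\Phi'$-valued L\'{e}vy processes (Corollary 3.11 of \cite{FonsecaMora:Levy}) quoted in the excerpt, so we may (and do) replace each $L^{n}$ by its regular c\`{a}dl\`{a}g version without changing the hypothesis on weak (in the duality sense) uniform convergence. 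Applying Theorem \ref{theoProcessesAlmosSureUnifConver} then yields a $\Phi'$-valued c\`{a}dl\`{a}g process $Y$ with Radon distributions for which, on a set of full probability and for each $T>0$, the convergence $L^{n}(\omega) \to Y(\omega)$ in $\Phi'_{p(\omega,T)}$ uniformly on $[0,T]$ holds. Everything left to check is that this $Y$ is a L\'{e}vy process in the sense recalled at the start of Section \ref{sectConvLevySEE}.

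Properties (i), (iii) and (iv) are short. Since $L^{n}_{0}=0$ $\Prob$-a.s.\ and $L^{n}_{0}(\omega)\to Y_{0}(\omega)$ in $\Phi'_{p}$ for a.e.\ $\omega$, we get $Y_{0}=0$ $\Prob$-a.s., giving (i). Since $Y$ is c\`{a}dl\`{a}g with $Y_{0}=0$, we have $Y_{t}\to 0$ $\Prob$-a.s.\ as $t\downarrow 0$, hence the laws $\mu_{t}$ of $Y_{t}$ converge weakly to $\delta_{0}$, giving (iv) (continuity at zero of $t\mapsto\mu_{t}$). For stationarity (iii), fix $0\le s\le t$ and $\phi_{1},\dots,\phi_{k}\in\Phi$; from the a.s.\ uniform convergence we deduce $(\inner{L^{n}_{t}-L^{n}_{s}}{\phi_{j}})_{j}\to(\inner{Y_{t}-Y_{s}}{\phi_{j}})_{j}$ and $(\inner{L^{n}_{t-s}}{\phi_{j}})_{j}\to(\inner{Y_{t-s}}{\phi_{j}})_{j}$ in $\R^{k}$ almost surely, hence in distribution. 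Stationarity of each $L^{n}$ gives equality of the pre-limit distributions for every $n$, so the limit distributions coincide; as $(Y_{t}-Y_{s})$ and $Y_{t-s}$ have Radon laws on $\Phi'$, coincidence of all cylindrical marginals upgrades to equality of $\Phi'$-laws.

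For independent increments (ii), fix $0\le t_{0}<t_{1}<\dots<t_{m}$ and any $\phi_{1},\dots,\phi_{k}\in\Phi$. For every $n$ the increments $(\inner{L^{n}_{t_{i}}-L^{n}_{t_{i-1}}}{\phi_{j}})_{j=1,\dots,k}$, $i=1,\dots,m$, are independent as $\R^{k}$-valued random vectors (this is a direct consequence of independence of the $\Phi'$-valued increments of $L^{n}$). Our a.s.\ uniform convergence on $[0,T]$ for each $\phi$ yields a.s.\ convergence of these $\R^{k}$-valued vectors to $(\inner{Y_{t_{i}}-Y_{t_{i-1}}}{\phi_{j}})_{j}$, and since independence is preserved under convergence in distribution of finite families, the limit vectors are independent across $i$. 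This shows all cylindrical increments of $Y$ are mutually independent, and a standard Radon argument (using that each $Y_{t_{i}}-Y_{t_{i-1}}$ has a Radon law on $\Phi'$, so its law is determined by its characteristic function on $\Phi$ and independence of $\Phi'$-valued Radon random variables is equivalent to independence of all cylindrical projections) upgrades this to independence of the $\Phi'$-valued increments.

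The conceptual content is packed into Theorem \ref{theoProcessesAlmosSureUnifConver}; the remaining work is bookkeeping, and the only delicate step is the last one, namely lifting cylindrical independence of the increments to genuine $\Phi'$-independence. This is where Radonness of the distributions of $Y_{t_{i}}-Y_{t_{i-1}}$ (already supplied by Theorem \ref{theoProcessesAlmosSureUnifConver}) is used essentially.
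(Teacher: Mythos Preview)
Your proof is correct and starts exactly as the paper does, by invoking Theorem \ref{theoProcessesAlmosSureUnifConver} to produce the c\`{a}dl\`{a}g limit $Y$ with Radon distributions; the two arguments diverge only in how the L\'{e}vy property of $Y$ is established. The paper observes that for each tuple $\phi_{1},\dots,\phi_{m}\in\Phi$ the $\R^{m}$-valued processes $(\inner{L^{n}}{\phi_{1}},\dots,\inner{L^{n}}{\phi_{m}})$ converge a.s.\ uniformly on compacts to $(\inner{Y}{\phi_{1}},\dots,\inner{Y}{\phi_{m}})$, cites Theorem 1.3.7 of \cite{ApplebaumLPSC} to conclude that each such limit is an $\R^{m}$-valued L\'{e}vy process (so $Y$ is a \emph{cylindrical} L\'{e}vy process), then uses Radonness and barrelledness to get equicontinuity of $(Y_{t}:t\in[0,T])$ via Theorem 2.10 and Proposition 3.10 of \cite{FonsecaMora:Existence}, and finally invokes Theorem 3.8 of \cite{FonsecaMora:Levy} to upgrade from cylindrical to genuine $\Phi'$-valued L\'{e}vy process. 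You instead verify the four defining properties directly. Your route is more self-contained in that it avoids the black box of Theorem 3.8 in \cite{FonsecaMora:Levy}, at the price of carrying out by hand the lift from cylindrical independence and stationarity to the $\Phi'$-level; the justification you sketch for that lift is essentially right, but it would be cleaner to note explicitly that, since $\Phi$ is barrelled, Radonness forces each increment to be \emph{regular} (Theorem 2.10 of \cite{FonsecaMora:Existence}), so the $\sigma$-algebra it generates coincides with that generated by its cylindrical projections---this is what actually powers the lift, rather than Radonness of the individual marginals alone.
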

\begin{proof}
The existence of a $\Phi'$-valued c\`{a}dl\`{a}g process $Y= (Y_{t}: t \geq 0)$ with Radon probability distributions which is the limit in the sense indicated in Theorem \ref{theoSequeLimitLevy} is a consequence of Theorem \ref{theoProcessesAlmosSureUnifConver}. So we only have to show that $Y$ is a $\Phi'$-valued L\'{e}vy process.  

In effect, for every $m \in \N$, $\phi_{1}, \dots, \phi_{m} \in \Phi$, the  convergence of the sequence $X^{n}$ to $Y$ shows that the sequence of $\R^{m}$-valued L\'{e}vy processes  $\left(\inner{L^{n}}{\phi_{1}}, \dots, \inner{L^{n}}{\phi_{m}} \right)$ converges almost uniformly on each bounded interval of time to $\left(\inner{Y}{\phi_{1}}, \dots, \inner{Y}{\phi_{m}} \right)$. Then $\left(\inner{Y}{\phi_{1}}, \dots, \inner{Y}{\phi_{m}} \right)$ is a $\R^{m}$-valued L\'{e}vy process (see \cite{ApplebaumLPSC}, Theorem 1.3.7). Hence $Y$ induces a cylindrical L\'{e}vy process in $\Phi'$. Moreover, since $Y$ has Radon probability  distributions and $\Phi$  is ultrabornological, by Theorem 2.10 and Proposition 3.10 in \cite{FonsecaMora:Existence} we have that for each $T>0$, the family of linear mappings $(Y_{t}: t \in [0,T])$ from $\Phi$ into $L^{0}\ProbSpace$ is equicontinuous. Hence, by Theorem 3.8 in  \cite{FonsecaMora:Levy} it follows that $Y$ has an indistinguishable version that is a $\Phi'$-valued L\'{e}vy process, thus $Y$ is a $\Phi'$-valued L\'{e}vy process. 
\end{proof}

Our next result exhibits sufficient conditions for the almost surely uniform convergence of the solutions of a sequence of linear evolution equations driven by L\'{e}vy noise.

For each $n=0,1,2,\dots$, let $L^{n}=(L^{n}_{t}: t \geq 0)$ be a $\Phi'$-valued L\'{e}vy process with c\`{a}dl\`{a}g paths, $(U^{n}(s,t): 0 \leq s \leq t)$ a backward evolution system with family of generators $(A^{n}(t): t \geq 0) \subseteq  \mathcal{L}(\Phi,\Phi)$ satisfying that the mapping $t \mapsto A^{n}(t) \phi$ is continuous from $[0,\infty)$ into $\Phi$ for every $\phi \in \Phi$, $\eta^{n}$ is a $\mathcal{F}_{0}$-measurable $\Phi'$-valued regular random variable.

As mentioned in the previous section, the generalized Langevin equation 
\begin{equation}\label{eqDefiSequenGenLangEqua}
dY^{n}_{t}=A^{n}(t)'Y^{n} dt+ dL^{n}_{t}, \quad Y^{n}_{0}=\eta^{n}_{0},
\end{equation} 
has a unique $\Phi'$-valued c\`{a}dl\`{a}g regular solution $X^{n}=(X^{n}_{t}: t \geq 0)$  which satisfies for each $t \geq 0$, $\phi \in \Phi$, 
\begin{equation} \label{eqDefiOrnUhlCadlagAlmsSureConverg}
\inner{X^{n}_{t}}{\phi}=\inner{\eta^{n}}{U^{n}(0,t)\phi}+\int_{0}^{t} \inner{L^{n}_{s}}{A^{n}(s)U^{n}(s,t)\phi} ds + \inner{L^{n}_{t}}{\phi}.
\end{equation}  
Sufficient conditions for almost surely uniform convergence of $X^{n}$ to $X$ are given below:

\begin{theorem}\label{theoAlmosSureConvOrnsUhleProce} Assume the following:
\begin{enumerate}
\item For $\Prob$-a.e. $\omega \in \Omega$, $\eta^{n}(\omega) \rightarrow \eta^{0}(\omega)$ in $\Phi'$. 
\item $U^{n}(0,t) \phi \rightarrow U^{0}(0,t) \phi$ as $n \rightarrow \infty$ for each $\phi \in \Phi$ uniformly in $t$ on bounded intervals of time, and $A^{n}(s) U^{n}(s,t) \phi \rightarrow A^{0}(s) U^{0}(s,t)\phi$ as $n \rightarrow \infty$ for each $\phi \in \Phi$ uniformly for $(s,t)$ in bounded intervals of time. 
\item For every $\phi \in \Phi$ and $T>0$, the sequence $\inner{L^{n}(\omega)}{\phi}$ converges uniformly to $\inner{L(\omega)}{\phi}$ as $n \rightarrow \infty$ on $[0,T]$  for $\Prob$-a.e. $\omega \in \Omega$. 
\end{enumerate}
Then for $\Prob$-a.e. $\omega \in \Omega$ and each $T>0$, there exists a continuous Hilbertian seminorm $p=p(\omega,T)$ on $\Phi$ such that $X^{n}(\omega)$ converges to $X^{0}(\omega)$ in $\Phi'_{p}$ uniformly on $[0,T]$.  
\end{theorem}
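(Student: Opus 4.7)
The plan is to verify the hypotheses of Theorem \ref{theoProcessesAlmosSureUnifConver} for the sequence $(X^{n}: n \in \N)$ and then identify the resulting limit with $X^{0}$. From the existence theory recalled at the end of Section 5.1, each $X^{n}$ is a $\Phi'$-valued regular c\`{a}dl\`{a}g process with Radon distributions. Thus the only nontrivial task is to prove that, for every $\phi \in \Phi$ and every $T>0$, $\inner{X^{n}(\omega)}{\phi}$ converges to $\inner{X^{0}(\omega)}{\phi}$ uniformly on $[0,T]$ for $\Prob$-a.e. $\omega$. Once that is established, Theorem \ref{theoProcessesAlmosSureUnifConver} supplies a $\Phi'$-valued c\`{a}dl\`{a}g process $Y$ such that, for a.e. $\omega$ and each $T>0$, $X^{n}(\omega) \rightarrow Y(\omega)$ in some $\Phi'_{p}$ uniformly on $[0,T]$; and since $\inner{Y}{\phi}$ and $\inner{X^{0}}{\phi}$ would be indistinguishable for every $\phi$, Proposition 2.12 in \cite{FonsecaMora:Existence} applied to the two regular c\`{a}dl\`{a}g processes $Y$ and $X^{0}$ yields $Y=X^{0}$ up to indistinguishability.

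To get the uniform convergence of $\inner{X^{n}}{\phi}$ I would split via the mild-solution formula \eqref{eqDefiOrnUhlCadlagAlmsSureConverg} into three contributions:
\begin{equation*}
\inner{\eta^{n}}{U^{n}(0,t)\phi}, \qquad \inner{L^{n}_{t}}{\phi}, \qquad \int_{0}^{t} \inner{L^{n}_{s}}{A^{n}(s)U^{n}(s,t)\phi}\, ds,
\end{equation*}
and treat them separately. The middle term is controlled directly by hypothesis (3). For the initial-condition term I would use that $\eta^{n}(\omega) \rightarrow \eta^{0}(\omega)$ in the barrelled space $\Phi'$, so $(\eta^{n}(\omega))_{n}$ is equicontinuous on $\Phi$ and admits a common bounding continuous seminorm; combining this with the $\Phi$-uniform-in-$t$ convergence $U^{n}(0,t)\phi \rightarrow U^{0}(0,t)\phi$ from hypothesis (2), and noting that $\{U^{0}(0,t)\phi: t \in [0,T]\}$ is a bounded subset of $\Phi$ on which $\eta^{n}-\eta^{0}\to 0$ strongly, yields uniform convergence in $t$ for a.e. $\omega$.

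The main obstacle is the integral term, which is dynamic in both slots. The key step is to invoke Theorem \ref{theoSequeLimitLevy}: hypothesis (3) together with the fact that each $L^{n}$ is a $\Phi'$-valued L\'{e}vy process yields, for a.e. $\omega$ and each $T>0$, a continuous Hilbertian seminorm $p=p(\omega,T)$ on $\Phi$ with $\sup_{s \leq T} p'(L^{n}_{s}(\omega)-L^{0}_{s}(\omega)) \rightarrow 0$. Writing
\begin{equation*}
\int_{0}^{t} \inner{L^{n}_{s}-L^{0}_{s}}{A^{n}(s)U^{n}(s,t)\phi}\, ds + \int_{0}^{t} \inner{L^{0}_{s}}{A^{n}(s)U^{n}(s,t)\phi - A^{0}(s)U^{0}(s,t)\phi}\, ds,
\end{equation*}
I would bound the first integral by $T \cdot \sup_{s\leq T} p'(L^{n}_{s}-L^{0}_{s}) \cdot \sup_{n,\, 0\leq s \leq t \leq T} p(A^{n}(s)U^{n}(s,t)\phi)$, the last supremum being finite because the doubly-indexed family is $\Phi$-convergent and hence $\Phi$-bounded. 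The second integral is bounded by $T \cdot \sup_{s \leq T} q'(L^{0}_{s}(\omega)) \cdot \sup_{0 \leq s \leq t \leq T} q(A^{n}(s)U^{n}(s,t)\phi - A^{0}(s)U^{0}(s,t)\phi)$ for any continuous seminorm $q$ witnessing the equicontinuity on $\Phi$ of the bounded trajectory $\{L^{0}_{s}(\omega): s \in [0,T]\} \subseteq \Phi'$ (available since $\Phi$ is barrelled). Both bounds vanish uniformly in $t \in [0,T]$ by hypothesis (2), completing the verification.

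The technical crux is thus the integral term: one genuinely needs the conclusion of Theorem \ref{theoSequeLimitLevy} (convergence of $L^{n}$ in a single Hilbert space uniformly in time), not merely the weak-per-$\phi$ uniform convergence, in order to uncouple the supremum over $s$ from the test element $A^{n}(s)U^{n}(s,t)\phi$, which varies with both $s$, $t$ and $n$.
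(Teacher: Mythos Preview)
Your proposal is correct and follows essentially the same route as the paper: decompose $\inner{X^{n}_{t}-X^{0}_{t}}{\phi}$ via the mild-solution formula into the initial-condition, L\'{e}vy, and integral pieces, handle the integral piece by invoking Theorem \ref{theoSequeLimitLevy} together with the $\Phi$-boundedness of $\{A^{n}(s)U^{n}(s,t)\phi\}$, and then conclude via Theorem \ref{theoProcessesAlmosSureUnifConver}. The only cosmetic differences are in which factor you freeze when splitting (you pair $\eta^{n}$ with $U^{n}-U^{0}$ and $\eta^{n}-\eta^{0}$ with $U^{0}$, the paper does the reverse) and that you obtain $\sup_{s\leq T} q'(L^{0}_{s}(\omega))<\infty$ from barrelledness alone, whereas the paper cites the stronger fact that $t\mapsto L^{0}_{t}(\omega)$ is c\`{a}dl\`{a}g into some $\Phi'_{p}$.
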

\begin{proof} We benefit from ideas used in the proof of Theorem 4.3 in \cite{PerezAbreuTudor:1992}.  Given $T>0$ and $\phi \in \Phi$, observe from \eqref{eqDefiOrnUhlCadlagAlmsSureConverg} that 
\begin{eqnarray*}
\inner{X^{n}_{t}-X^{0}_{t}}{\phi}
& = & \inner{\eta^{n}}{U^{n}(0,t)\phi}- \inner{\eta^{0}}{U^{0}(0,t)\phi} \\
& + & \int_{0}^{t} \inner{L^{n}_{s}}{A^{n}(s)U^{n}(s,t)\phi} ds- \int_{0}^{t} \inner{L^{0}_{s}}{A^{0}(s)U^{0}(s,t)\phi} ds \\
& + & \inner{L^{n}_{t}-L^{0}_{t}}{\phi}.
\end{eqnarray*}
We must check that the three terms in the right-hand side of the above equality all converge to $0$ as $n \rightarrow \infty$. 

First, we have clearly by $(3)$ that   $\sup_{0 \leq t \leq T} \abs{ \inner{L^{n}_{t}-L^{0}_{t}}{\phi}} \rightarrow 0$ as $n \rightarrow \infty$. 

Now, observe that by $(2)$ the set 
$$ B = \bigcup_{n} \bigcup_{0 \leq t \leq T} U^{n}(0,t) \phi, $$
is a bounded subset of $\Phi$. Moreover, by a direct consequence of the Banach-Steinhaus theorem (recall $\Phi$ is barrelled) for each $\omega \in \Omega$ there exists a continuous Hilbertian seminorm $p=p(\omega)$ on $\Phi$ such that $\eta^{0}(\omega) \in \Phi'_{p}$. Therefore,  by $(1)$ and $(2)$, for $\Prob$-a.e. $\omega \in \Omega$ and corresponding $p=p(\omega)$ we have
\begin{flalign*}
& \sup_{0 \leq t \leq T} \abs{ \inner{\eta^{n}}{U^{n}(0,t)\phi}- \inner{\eta^{0}}{U^{0}(0,t)\phi}  } \\
& \leq p'(\eta^{0}) \sup_{0 \leq t \leq T} p(U^{n}(0,t)\phi-U^{0}(0,t)\phi)  +  \sup_{\psi \in B} \abs{ \inner{\eta^{n}-\eta^{0}}{\psi}} \rightarrow 0, 
\end{flalign*}
as $n \rightarrow \infty$.  

We now need to prove the convergence of the integral term. To do this we will need to make some preparations. First, observe that by $(2)$ the set 
$$ K = \bigcup_{n} \bigcup_{0 \leq s \leq t \leq T} A^{n}(s) U^{n}(s,t) \phi, $$
is a bounded subset of $\Phi$. 

On the other hand, by $(3)$ and Theorem \ref{theoSequeLimitLevy}, for $\Prob$-a.e. $\omega \in \Omega$ we have $L^{n}(\omega) \rightarrow L^{0}(\omega)$ in $\Phi'$ uniformly on $[0,T]$. Furthermore,  for $\Prob$-a.e. $\omega \in \Omega$ by Theorem 3.10 in \cite{FonsecaMora:Levy} and Remark 3.9 in \cite{FonsecaMora:Existence}, there exists a continuous Hilbertian seminorm $p=p(\omega,T)$ on $\Phi$ such that the mapping $t \mapsto L^{0}_{t}(\omega)$ is c\`{a}dl\`{a}g from $[0,T]$ into $\Phi'_{p}$. 

Then, given the arguments from the above paragraphs and by $(2)$ we have for $\Prob$-a.e. $\omega \in \Omega$ with corresponding $p=p(\omega,T)$, that 
\begin{flalign*}
& \sup_{0 \leq t \leq T} \abs{\int_{0}^{t} \inner{L^{n}_{s}}{A^{n}(s)U^{n}(s,t)\phi} ds- \int_{0}^{t} \inner{L^{0}_{s}}{A^{0}(s)U^{0}(s,t)\phi} ds } \\
& \leq  T \sup_{0 \leq t \leq T} p'(L_{t}^{0}) \sup_{0 \leq s \leq t \leq T} p(A^{n}(s)U^{n}(s,t)\phi -A^{0}(s)U^{0}(s,t)\phi) \\
& \hspace{10pt} + T \sup_{0 \leq t \leq T} \sup_{\psi \in K} \abs{\inner{L^{n}_{t}-L^{0}_{t}}{\psi}} \rightarrow 0, 
\end{flalign*}
as $n \rightarrow \infty$. Then, by what we have proved above, for $\Prob$-a.e. $\omega \in \Omega$ we have $\sup_{0 \leq t \leq T} \abs{\inner{X^{n}_{t}-X^{0}_{t}}{\phi}} \rightarrow 0$ as $n \rightarrow \infty$. The conclusion of Theorem \ref{theoAlmosSureConvOrnsUhleProce} now follows from Theorem \ref{theoProcessesAlmosSureUnifConver}. 
\end{proof}


\begin{remark}
In the following paragraphs we describe some sufficient conditions for assumption (2) in Theorem \ref{theoAlmosSureConvOrnsUhleProce} to hold. These conditions were introduced in the works \cite{KallianpurPerezAbreu:1988, KallianpurPerezAbreu:1989, PerezAbreuTudor:1992} on the study of solutions to linear stochastic evolution equations driven by square integrable martingale noise. 

Let  $\Phi$ be a Fr\'{e}chet nuclear space. For each $n=0,1,2, \dots$, let  $(A^{n}(t): t \geq 0) \subseteq  \mathcal{L}(\Phi,\Phi)$ such that for each $t \geq 0$, $A^{n}(t)$ is the infinitesimal generator of a $(C_{0},1)$-semigroup $(S^{n}_{t}(s): s \geq 0)$ on $\Phi$ 
(see Section 4.1 in \cite{FonsecaMora:SEELevy} and references therein). 

Assume that there exists an increasing sequence $(q_{m}: m \geq 0)$ of norms generating the topology on $\Phi$ such that the following two conditions hold: 
\begin{enumerate}
\item For every $k \geq 0$, there exists $m \geq k$ such that, for each $t \geq 0$ and $n \geq 0$, $A^{n}(t)$ has a continuous linear extension form $\Phi_{q_{m}}$ into $\Phi_{q_{k}}$ (also denoted by $A^{n}(t)$) and the mapping $t \mapsto A^{n}(t)$ is $\mathcal{L}(\Phi_{q_{m}}, \Phi_{q_{k}})$-continuous.
\item For each $T>0$ there exists $k_{0} \geq 0$ and for $k \geq k_{0}$ there are constants $M_{k}=M_{k}(T) \geq 1$ and $\sigma_{k}= \sigma_{k}(T)$ such that for every $n \geq 0$,
\begin{equation}\label{eqDefiStableGenerators}
q_{k} \left(S^{n}_{t_{1}}(s_{1})S^{n}_{t_{2}}(s_{2}) \cdots S^{n}_{t_{m}}(s_{m}) \phi \right) \leq M_{k} \exp \left(  \sigma_{k} \sum_{j=1}^{m} s_{j} \right) q_{k}(\phi), \quad \forall \phi \in \Phi, \, s_{j} \geq 0, 
\end{equation} 
whenever $0 \leq t_{1} \leq t_{2} \leq \cdots \leq t_{m} \leq T$, $m \geq 0$. 
\end{enumerate}

By Theorem 1.3 in \cite{KallianpurPerezAbreu:1988} for each $n \geq 0$ there exists a unique $(C_{0},1)$-backward evolution system $(U^{n}(s,t): 0 \leq s \leq t < \infty)$ on $\Phi$ which is generated by the family $(A^{n}(t):t \geq 0)$. Assume moreover that the following is satisfied:

\begin{enumerate} \setcounter{enumi}{2}
\item For each $T>0$ and $k>0$ there exists $m > k$ such that
$$\lim_{n \rightarrow \infty} \sup_{0 \leq t \leq T} \norm{A^{n}(t)-A^{0}(t)}_{\mathcal{L}(\Phi_{q_{m}}, \Phi_{q_{k}})}=0. $$
\item For each $T>0$ and $k>k_{0}$ (with $k_{0}$ corresponding to $T$ as in $(2)$) we have
$$ \sup_{0 \leq s \leq t \leq T} \norm{U^{n}(s,t)}_{\mathcal{L}(\Phi_{q_{k}}, \Phi_{q_{k}})}< \infty. $$
\end{enumerate}
Then, one can show by following similar arguments to those used in the proof of Lemma 2.2 in \cite{KallianpurPerezAbreu:1989}  that (1)-(4) above imply that given $T>0$ for some $k \in \N$ we have 
$$ \sup_{0 \leq s \leq t \leq T} q_{k}(U^{n}(s,t)\phi-U^{0}(s,t)\phi) \rightarrow 0, \quad \mbox{ as } n \rightarrow \infty, \mbox{ for all } \phi \in \Phi,  $$
and 
$$ \sup_{0 \leq s \leq t \leq T} q_{k}(A^{n}(s)U^{n}(s,t)\phi-A^{0}(s)U^{0}(s,t)\phi) \rightarrow 0, \quad \mbox{ as } n \rightarrow \infty, \mbox{ for all } \phi \in \Phi. $$
Thus assumption (2) in Theorem \ref{theoAlmosSureConvOrnsUhleProce} is satisfied since convergence in the norm $q_{k}$ is stronger than convergence in $\Phi$.  

It is worth to mention that the sufficient conditions introduced above for the case of a nuclear Fr\'{e}chet space $\Phi$ can be modified to hold under the assumption that $\Phi$ is a (general) nuclear space. See Theorem 6.6 in \cite{Wu:1994}. 
\end{remark}

\smallskip

\textbf{Acknowledgements} The author thanks The University of Costa Rica for providing financial support through the grant ``821-C2-132- Procesos cil\'{i}ndricos y ecuaciones diferenciales estoc\'{a}sticas''.

\end{document}